\documentclass{amsart}
\usepackage[margin=2cm]{geometry}
\setlength{\textheight}{9.2in}

\usepackage{graphics, amsmath,amssymb, enumitem, comment, url, mathtools}
\usepackage{graphicx}
\usepackage{epsfig}

\setlength{\parskip}{2ex}

\newif\ifcolorcomments
\newcommand{\allowcomments}[4]{
\newcommand{#1}[1]{\ifdraft{\ifcolorcomments{\textcolor{#4}{##1 --#3}}\else{\textsl{ ##1 \ --#3}}\fi}\else{}\fi}
}

\usepackage[dvipsnames]{xcolor}

\colorcommentstrue
\usepackage{amssymb}
\usepackage{amsmath,amsthm}

\usepackage{colortbl}

\newtheorem{theorem}{Theorem}[section]
\newtheorem{lemma}[theorem]{Lemma}
\newtheorem{proposition}[theorem]{Proposition}
\newtheorem{corollary}[theorem]{Corollary}
\theoremstyle{definition}

\newtheorem{remark}[theorem]{Remark}

\setlength{\parskip}{\medskipamount}







\newcommand{\D}{\mathbb D}

\newcommand{\EE}{\mathcal E}

\newcommand{\FF}{\mathcal F}

\newcommand{\HH}{\mathcal H}

\newcommand{\N}{\mathbb N}

\newcommand{\R}{\mathbb R}

\newcommand{\UU}{\mathcal U}

\newcommand{\mbf}{\mathbf}

\newcommand{\dd}{\mbf d}




\renewcommand{\text}{\textup}

\newcommand{\NPC}[1]{\ignorespaces}

\renewcommand{\H}{\mathbb H}
\renewcommand{\SS}{\mathcal S}

\newif\ifdraft\drafttrue

\def\N{\mathbb N}

\def\R{\mathbb R}

\def\D{\mathcal D}
\def\H{\mathcal H}

\newcommand\hdim{\dim_{\mathrm H}}
\newcommand{\subscript}[2]{$#1 _ #2$}

\IfFileExists{marvosym.sty}{
\RequirePackage{marvosym} 
}{

}

\IfFileExists{wasysym.sty}{
\RequirePackage{wasysym}\renewcommand{\emptyset}{{\diameter}}
}{

}


\newcommand*{\myDots}{\ifmmode\mathellipsis\else.\kern-0.07em.\kern-0.07em.\fi}
\DeclarePairedDelimiter\floor{\lfloor}{\rfloor}

\newcommand*{\defeq}{\stackrel{\text{def}}{=}}

\allowcomments{\commumtaz}{MH}{Mumtaz}{green}
\allowcomments{\comnikita}{NS}{Nikita}{blue}
\allowcomments{\combixuan}{BL}{Bixuan}{red}

\newcommand {\ignore}[1] {}

\newcommand{\commenty}[1]{}

\begin{document}

\title[Hausdorff dimension for exponentially growing partial quotients]{Metrical properties of exponentially growing partial quotients}

\author[Mumtaz Hussain]{Mumtaz Hussain}
\address{Mumtaz Hussain,  Department of Mathematical and Physical Sciences,  La Trobe University, Bendigo 3552, Australia. }
\email{m.hussain@latrobe.edu.au}

\author{Nikita Shulga}
\address{Nikita Shulga,  Department of Mathematical and Physical Sciences,  La Trobe University, Bendigo 3552, Australia. }
\email{n.shulga@latrobe.edu.au}
\date{}

\maketitle

\numberwithin{equation}{section}
\begin{abstract}
A fundamental challenge within the metric theory of continued fractions involves quantifying sets of real numbers, when represented using continued fractions, exhibit partial quotients that grow at specific rates. For any positive function $\Phi$, Wang-Wu theorem (2008) comprehensively describes the Hausdorff dimension of the set
\begin{equation*}
\EE_1(\Phi):=\left\{x\in [0, 1): a_n(x)\geq \Phi(n) \ {\rm for \ infinitely \ many} \ n\in \N\right\}. 
\end{equation*}
Various generalisations of this set exist, such as substituting one partial quotient with the product of consecutive partial quotients in the aforementioned set which has connections with the improvements to Dirichlet's theorem, and many other sets of similar nature. Establishing the upper bound of the Hausdorff dimension of such sets is significantly easier than proving the lower bound.  In this paper, we present a unified approach to get an optimal lower bound for many known setups, including results by Wang-Wu [Adv. Math.,
2008],  Huang-Wu-Xu  [Israel J. Math. 2020], Bakhtawar-Bos-Hussain [Nonlinearity 2020], and several others, and also provide a new theorem derived as an application of our main result. We do this by finding an exact Hausdorff dimension of the set 
$$S_m(A_0,\ldots,A_{m-1}) \defeq \left\{ x\in[0,1): \,  c_i A_i^n \le a_{n+i}(x) < 2c_i A_i^n,0 \le i \le m-1 \  \text{for infinitely many } n\in\N \right\},$$
where each partial quotient grows exponentially and the base is given by a parameter $A_i>1$. For proper choices of $A_i$'s, this set serves as a subset for sets under consideration, providing an optimal lower bound of Hausdorff dimension in all of them. The crux of the proof lies in introducing of multiple probability measures consistently distributed over the Cantor-type subset of $S_m(A_0,\ldots,A_{m-1})$. 
\end{abstract}

\section{Introduction}

It is well-known that every irrational number $x\in (0, 1)$ has a unique infinite continued fraction expansion. This expansion can be induced by the Gauss map  $T: [0,1)\to [0,1)$ defined by
\[T(0)=0, ~ T(x)=\frac{1}{x}-\floor*{\frac{1}{x}} \textmd{ for }x\in(0,1),\]
where $\lfloor x\rfloor$ denotes the integer part of $x$.  We write $x:=[a_{1}(x),a_{2}(x),a_{3}(x),\ldots ]$ for the continued fraction of $x$, where $a_1(x)=\lfloor 1/x \rfloor$, $a_{n}(x)= a_1(T^{n-1}(x))$ for $n\ge2$ are called the partial quotients of $x$ (or continued fraction digits of $x$). 
%
The metric theory of continued fractions concerns the quantitative study of properties of partial quotients for almost all $x\in(0, 1)$. This area of research is closely connected with metric Diophantine approximation, for example, fundamental theorems in this field by Khintchine (1924) and Jarn\'ik (1931) can be represented in terms of the growth of partial quotients. The classical Borel-Bernstein theorem (1912) states that the Lebesgue measure of the set 
\begin{equation*}
\EE_1(\Phi):=\left\{x\in [0, 1): a_n(x)\geq \Phi(n) \ {\rm for \ infinitely \ many} \ n\in \N\right\}
\end{equation*}
is either zero or one depending upon the convergence or divergence of the series $\sum_{n=1}^\infty \Phi(n)^{-1}$ respectively. Here and throughout  $\Phi:\N\to [1, \infty)$ will be an arbitrary function such that $\Phi(n)\to\infty$ as $n\to \infty$.  For rapidly increasing functions $\Phi$, the Borel-Bernstein (1911, 1912) theorem gives no  further information than the zero Lebesgue measure of $\EE_1(\Phi)$. To distinguish between the sets of zero Lebesgue measure,  the Hausdorff dimension is an appropriate tool. 
 For an arbitrary function $\Phi$, the dimension of $\EE_1(\Phi)$ was computed by Wang-Wu \cite{WaWu08}.

The consideration of the growth of product of consecutive partial quotients played a significant role in understanding the uniform approximation theory (improvements to Dirichlet's theorem as opposed to improvements to Dirichlet's corollary). In particular, the set
\begin{equation*}
\EE_m(\Phi):=\left\{x\in [0, 1): \prod_{i=0}^{m-1}a_{n+i}(x)\geq \Phi(n) \ {\rm for \ infinitely \ many} \ n\in \N\right\}
\end{equation*}
received significant attention recently. See  \cite{KleinbockWadleigh}  for the Lebesgue measure of $\EE_2(\Phi)$, \cite{BHS, HKWW} for the Hausdorff measure of $\EE_2(\Phi(q_n))$,  \cite{HuWuXu} for the Lebesgue measure and Hausdorff dimension of $\EE_m(\Phi)$, and  \cite{BBH2, HLS} for the Hausdorff dimension of difference of sets $\EE_2(\Phi)\setminus \EE_1(\Phi)$. 

In this paper, we provide a unified treatment to all of these results (and some others) and retrieve them from our main theorem proved below. Given the breadth of the generality we envisage there will be many more applications other than those listed below.

\subsection{Main result} 
For a fixed integer number $m$ and for all integers $0\le i\le m-1$,  let $A_i>1$ be a real number. Define the set
$$S_m=S_m(A_0,\ldots,A_{m-1}) := \left\{ x\in[0,1): \,  c_i A_i^n \le a_{n+i}(x) < 2c_i A_i^n,\, 0 \le i \le m-1\quad \text{for infinitely many } n\in\N \right\},$$
where $c_i\in \R_{>0}$.  For any $0 \le i \le m-1$, define the quantities
$$
\beta_{-1} = 1,\,\,\, \beta_i = A_0\cdots A_{i} .
$$
Let
\begin{equation}\label{defd}
d_i =\inf\{ s\ge0 : P( T, -s \log | T'(x)| - s \log \beta_i + (1-s) \log \beta_{i-1})\le 0\},
\end{equation}
{where  $P(T, \psi)$ is a pressure function, the definition is given in Section \ref{Pressure Functions}. }

The main result of the paper is the following theorem.
\begin{theorem}\label{General}
$$\hdim S_m = \min_{0 \le i \le m-1} d_i.$$
\end{theorem}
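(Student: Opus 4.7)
The approach splits into matching upper and lower bounds. \textbf{Upper bound:} For each fixed $i \in \{0,\ldots,m-1\}$, I would cover $S_m$ by cylinders $J_{n+i}(a_1,\ldots,a_{n+i})$ where the initial $n-1$ coordinates are arbitrary but the last $i+1$ are forced into $[c_j A_j^n, 2c_j A_j^n)$. Since $|J_{n+i}| \asymp q_{n+i}^{-2}$ and $q_{n+i} \asymp q_{n-1}\prod_{j=0}^{i}a_{n+j}$ when the trailing digits are large, summing $|J_{n+i}|^s$ over the free initial digits reduces to iterating a suitable Ruelle--Perron--Frobenius operator. Standard thermodynamic formalism for the Gauss system identifies its exponential growth rate with the pressure $P(T,-s\log|T'|-s\log\beta_i+(1-s)\log\beta_{i-1})$, and requiring convergence of the $s$-sum gives $\hdim S_m \le d_i$. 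Minimising over $i$ yields $\hdim S_m \le \min_i d_i$.

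\textbf{Lower bound:} Fix $\epsilon>0$, let $i^\star$ realise $\min_i d_i$, and set $s=d_{i^\star}-\epsilon$ so that the corresponding pressure is strictly positive. I would construct a Cantor subset $F\subset S_m$ along a rapidly growing sequence $(n_k)$: on each block $[n_k,n_k+m-1]$ force $a_{n_k+j}\in [c_j A_j^{n_k},2c_j A_j^{n_k})$, while on the interstitial ranges $[n_{k-1}+m,n_k-1]$ let the partial quotients run over $\{1,\ldots,M\}$ for a large integer $M$ whose associated pressure exceeds the target. On $F$ I would place a family of probability measures $\mu_0,\ldots,\mu_{m-1}$, each obtained by distributing mass through Gibbs-type weights derived from the optimiser of the pressure defining $d_i$, so that $\mu_i$ gives cylinder mass $\asymp |J_{n_k+i}|^{d_i}$ at the distinguished depths and spreads uniformly (in the Gibbs sense) over the free interstitial digits. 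A scale-by-scale bound on $\mu_i(B(x,r))$ together with the mass distribution principle then yields $\hdim F \ge d_i-\epsilon$ for each $i$, and sending $\epsilon\to 0$ produces the reverse inequality $\hdim S_m \ge \min_i d_i$.

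\textbf{Main obstacle.} The hard part will be controlling $\mu_i(B(x,r))$ when $r$ sits at an intermediate scale inside a block, specifically between $|J_{n_k+i}|$ and $|J_{n_k+i+1}|$. Because $a_{n_k+i+1}$ is of size $A_{i+1}^{n_k}$, cylinder lengths drop by the huge factor $A_{i+1}^{2n_k}$ from one level to the next, so a ball of intermediate radius may contain many children of a single parent cylinder; the ``correct'' Gibbs exponent depends on which of the $m$ windows $r$ falls into. No single measure is simultaneously optimal in every window, and this is precisely why one constructs $\mu_0,\ldots,\mu_{m-1}$ and combines them via $\min_i d_i$. Beyond this, the remaining work is to assemble the standard Gauss-map tools (growth of $q_n$, bounded distortion, pressure of the free-digit shift) consistently enough that each $\mu_i$ attains exactly the pressure-theoretic exponent $d_i$ rather than a weaker bound, and to match the thermodynamic formalism used in the upper bound so that the two estimates lock together at $\min_i d_i$.
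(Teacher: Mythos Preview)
Your overall architecture matches the paper's: the upper bound is obtained from $m$ distinct coverings (one at each block depth $i$, with $|J_{n-1+i}|\asymp (\beta_i^n\beta_{i-1}^n q_{n-1}^2)^{-1}$), and the lower bound comes from a Cantor subset along a sparse sequence $(n_k)$ with free digits in $\{1,\ldots,M\}$, equipped with $m$ probability measures $\mu_0,\ldots,\mu_{m-1}$, followed by the mass distribution principle.

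There is, however, a genuine logical slip in how you close the lower bound. You write that the mass distribution principle ``yields $\hdim F \ge d_i-\epsilon$ for each $i$''. Taken literally this gives $\hdim F \ge \max_i d_i$, which contradicts the upper bound whenever the $d_i$ are not all equal. The point is not that each $\mu_i$ delivers its own exponent $d_i$ globally; rather, one uses only the single measure $\mu_{i^\star}$ with $d_{i^\star}=\min_i d_i$, and what must be verified is that \emph{this} measure satisfies
\[
\mu_{i^\star}\bigl(J_{n_k-1+i}(x)\bigr)\ \lesssim\ \bigl|J_{n_k-1+i}(x)\bigr|^{d_{i^\star}-\epsilon}
\]
for \emph{every} block index $0\le i\le m-1$, not just for $i=i^\star$. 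In the paper this reduces to the algebraic inequality
\[
\frac{\beta_{j-1}^{\,1-d_j}}{\beta_j^{\,d_j}}\ \le\ \frac{\beta_{k-1}^{\,1-d_j}}{\beta_k^{\,d_j}}\qquad\text{whenever }d_j\le d_k,
\]
which is derived by comparing the pressure equations defining $d_j$ and $d_k$ (both involve the same sum $\sum q_N^{-2s}$, so the ordering of the $d$'s forces an ordering of the $\beta$-ratios). This inequality is precisely what converts the minimality of $d_{i^\star}$ into a uniform H\"older bound across all block scales, and it is the real heart of the argument. Your ``main obstacle'' paragraph correctly locates the difficulty, but the resolution you offer (``combine them via $\min_i d_i$'') is not a mechanism; without the inequality above the lower bound does not close.
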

{
 \begin{remark}
 Even though the set $S_m$ depends on $c_0,\ldots,c_{m-1}$, the exact values of these constants does not  change the Hausdorff dimension of the set.
 \end{remark}

\subsection{Applications (summary)}As applications of our theorem, we obtain optimal lower bounds of Hausdorff dimension of the sets listed below, the detailed proofs are given in section 4. Note that, at their own, the proofs of the lower bound of the Hausdorff dimension of these sets were the main ingredients in the papers listed next to them, excluding the last set which is a new application.

\begin{itemize}
\item Wang-Wu \cite{WaWu08}.
$$
F(B) := \{ x\in [0,1) : a_n(x) \ge B^n  \ \text{\,for infinitely many } n\in\N \}.
$$
\item Bakhtawar-Bos-Hussain \cite{BBH2}.
\begin{equation*}\label{pqc}
 \mathcal{F}(B ):=\EE_2(B)\setminus\EE_1(B)=\left\{ x\in [
0,1):
\begin{array}{r}
a_{n+1}(x)a_{n}(x)\geq B^n \text{ for infinitely many }n\in 
\mathbb{N}
\text{ and} \\ 
a_{n+1}(x)<B^n \text{ for all sufficiently large }n\in 
\mathbb{N}
\end{array}
\right\}.
\end{equation*}

\item Hussain-Li-Shulga \cite{HLS}.
$$
E(A_1,A_2): = \left\{ x\in[0,1): \, c_1 A_1^n\leq a_n(x) <2c_1A_1^n,  \, \,  c_2 A_2^n\leq a_{n+1}(x)<2  c_2 A_2^n,\,\text{for infinitely many }n\in\N \right\}
$$
and
\begin{equation*}
\FF_{B_1,B_2}:=\left\{x\in[0,1):
\begin{split} 
 a_n(x)a_{n+1}(x) & \geq B_1^n \text{\,\, for infinitely many } n\in\N \\
 a_{n+1}(x) & < B_2^n  \text{\,\, for all sufficiently large } n\in\N
\end{split}
\right\}.
\end{equation*}

\item Huang-Wu-Xu \cite{HuWuXu}. For $m\geq1$, the set
$$
E_m (B) := \{ x\in[0,1) : a_n(x) a_{n+1}(x) \cdots a_{n+m-1} (x) \ge B^n \  \text{for infinitely many } n\in\N \}
$$

\item Bakhtawar-Hussain-Kleinbock-Wang \cite{BHKW}.
\begin{equation*}\label{weight}
\D_2^{\mathbf t}(B):=\left\{x\in[0, 1):  a_{n}^{t_0} a_{n+1}^{t_1} \ge B^n \ {\text{for infinitely many}} \ n\in \N \right\}.
\end{equation*}

\item Tan-Tian-Wang  \cite{TanTianWang}.
$$
E(\psi): = \{ x\in [0,1) : \exists 1 \le k\neq l \le n , a_k(x) \ge \psi(n), a_l(x) \ge \psi(n), \ \text{for infinitely many } n\in\N \}.
$$

\item 
For $m\ge2$, the set
\begin{equation}
\FF^m_{B_1,B_2}:=\left\{x\in[0,1):
\begin{split} 
 a_{n}(x)\cdots a_{n+m-1}(x) & \geq B_1^n \text{\,\, for infinitely many } n\in\N \\
 a_{n+1}(x)\cdots a_{n+m-1}(x) & < B_2^n  \text{\,\, for all sufficiently large } n\in\N
\end{split}
\right\},
\end{equation} 
which is a new application of Theorem \ref{General}. We also provide an upper bound of Hausdorff dimension for this set.
\end{itemize}
}


\medskip
The proof of Theorem \ref{General} will be given in Section 3. In Section 4 we apply this theorem to the sets listed above, showing that Theorem \ref{General} gives an optimal lower bound in all of them. 

\noindent {\bf Acknowledgements.}   This research is supported by the Australian Research Council Discovery Project (200100994).  We thank Bixuan Li for useful discussions.

\section{Preliminaries and auxiliary results}\label{S2}

For completeness we give a brief introduction to Hausdorff measures and dimension.  For further details we refer to \cite{BernikDodson, Falconer_book}.
\subsection{Hausdorff measure and dimension}\label{HM}\
Let $s\geq0$ and 
$E\subset \R^n$.
 Then, for any $\rho>0$ a countable collection $\{B_i\}$ of balls in
$\R^n$ with diameters $\mathrm{diam} (B_i)\le \rho$ such that
$E\subset \bigcup_i B_i$ is called a $\rho$-cover of $E$.
Let
\[
\H_\rho^s(E)=\inf \sum_i \mathrm{diam}(B_i)^s,
\]
where the infimum is taken over all possible $\rho$-covers $\{B_i\}$ of $E$. It is easy to see that $\H_\rho^s(E)$ increases as $\rho$ decreases and so approaches a limit as $\rho \rightarrow 0$. This limit could be zero or infinity, or take a finite positive value. Accordingly, the \textit{$s$-Hausdorff measure $\H^s$} of $E$ is defined to be
\[
\H^s(E)=\lim_{\rho\to 0}\H_\rho^s(E).
\]
It is easy to verify that Hausdorff measure is monotonic and countably sub-additive, and that $\H^s(\varnothing)=0$. Thus it is an outer measure on $\R^n$.
For any subset $E$ one can verify that there exists a unique critical value of $s$ at which $\H^s(E)$ `jumps' from infinity to zero. The value taken by $s$ at this discontinuity is referred to as the \textit{Hausdorff dimension of  $E$} and  is denoted by $\hdim E $; i.e.,
\[
\hdim E :=\inf\{s\in \R_+\;:\; \H^s(E)=0\}.
\] When $s=n$,  $\H^n$ coincides with standard Lebesgue measure on $\R^n$.
Computing Hausdorff dimension of a set is typically accomplished in two steps: obtaining the upper and lower bounds separately.
Upper bounds often can be handled by finding appropriate coverings. When dealing with a limsup set, one 
 {usually applies} the Hausdorff measure version of the famous Borel--Cantelli lemma (see Lemma 3.10 of \cite{BernikDodson}):
\begin{proposition}\label{bclem}
    Let $\{B_i\}_{i\ge 1}$ be a sequence of measurable  sets in $\R$ and suppose that,  $$\sum_i \mathrm{diam}(B_i)^s \, < \, \infty.$$ Then  $$\H^s({\limsup_{i\to\infty}B_i})=0.$$
\end{proposition}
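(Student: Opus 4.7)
The plan is to exploit the elementary identity $\limsup_{i\to\infty} B_i = \bigcap_{N\ge 1}\bigcup_{i\ge N} B_i$, which shows that for every $N$, the tail family $\{B_i\}_{i\ge N}$ is a cover of $\limsup B_i$. Since $\sum_i \mathrm{diam}(B_i)^s < \infty$, we have in particular $\mathrm{diam}(B_i)^s \to 0$, hence $\mathrm{diam}(B_i) \to 0$. Thus for any prescribed $\rho > 0$ we may choose $N_0 = N_0(\rho)$ so large that $\mathrm{diam}(B_i) \le \rho/2$ for every $i \ge N_0$.

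The $\rho$-covers in the definition of $\mathcal{H}^s_\rho$ are required to consist of balls, whereas the $B_i$ are only measurable sets. I would handle this by replacing each $B_i$ (for $i\ge N_0$) with a closed ball $\tilde B_i$ of diameter at most $2\,\mathrm{diam}(B_i) \le \rho$ containing $B_i$; one obtains such a ball by fixing any point $p\in B_i$ and taking the ball of radius $\mathrm{diam}(B_i)$ about $p$. The collection $\{\tilde B_i\}_{i\ge N_0}$ is then a genuine $\rho$-cover of $\limsup B_i$, giving
\begin{equation*}
\mathcal{H}^s_\rho\bigl(\limsup_{i\to\infty} B_i\bigr) \;\le\; \sum_{i\ge N_0} \mathrm{diam}(\tilde B_i)^s \;\le\; 2^s \sum_{i\ge N_0} \mathrm{diam}(B_i)^s.
\end{equation*}

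As $\rho \to 0$ we have $N_0(\rho) \to \infty$, and the tail of a convergent series tends to zero, so the right-hand side vanishes in the limit. Therefore $\mathcal{H}^s(\limsup_{i\to\infty} B_i) = \lim_{\rho\to 0} \mathcal{H}^s_\rho(\limsup_{i\to\infty} B_i) = 0$, as required.

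There is no real obstacle here; the only subtlety worth flagging is the cosmetic mismatch between the paper's ball-based definition of $\rho$-covers and the fact that the $B_i$ are general measurable sets. The enlargement step above takes care of this at the cost of a harmless factor of $2^s$, and everything else is a direct application of the convergence of the tail sum together with $\mathrm{diam}(B_i)\to 0$.
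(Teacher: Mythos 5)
Your argument is correct, and it is the standard proof of the Hausdorff--Cantelli lemma. The paper itself does not prove this proposition; it simply invokes it with a citation to \cite[Lemma 3.10]{BernikDodson}, so there is no in-paper proof to compare against. Your chain of reasoning --- use the tail family $\{B_i\}_{i\ge N}$ as a cover of the $\limsup$, note $\mathrm{diam}(B_i)\to 0$ from convergence of $\sum_i \mathrm{diam}(B_i)^s$ (for $s>0$), enlarge each $B_i$ to a ball to match the paper's ball-based definition of $\rho$-covers at the cost of a factor $2^s$, and conclude via the vanishing of the tail of a convergent series --- is exactly what the cited reference does. One cosmetic remark: the step $\mathrm{diam}(B_i)^s\to 0 \Rightarrow \mathrm{diam}(B_i)\to 0$ silently uses $s>0$; for $s=0$ the hypothesis forces all but finitely many $B_i$ to be empty, so $\limsup_i B_i=\varnothing$ and the conclusion is trivial. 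You might also streamline the quantifier handling by, for each $N$, setting $\rho_N=2\sup_{i\ge N}\mathrm{diam}(B_i)$ (which tends to $0$) and bounding $\mathcal H^s_{\rho_N}$ directly by the tail sum, rather than choosing $N_0(\rho)$ for each $\rho$; but your version is equivalent and perfectly sound.
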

The main tool in establishing the lower bound for the dimension of $ S_m(A_0,\ldots,A_{m-1})$ will be the following well-known mass distribution principle \cite{Falconer_book}.
\begin{proposition}[\cite{Falconer_book}, Mass Distribution Principle]\label{p1}
Let $\mu$ be a probability measure supported on a measurable set $F$. Suppose there are positive constants $c$ and $r_0$ such that
$$\mu(B(x,r))\le c r^s$$
for any ball $B(x,r)$ with radius $r\le r_0$ and center $x\in F$. Then $\hdim F\ge s$.
\end{proposition}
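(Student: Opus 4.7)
The plan is to establish the quantitative lower bound $\H^s(F) \ge 1/c > 0$, from which $\hdim F \ge s$ follows immediately from the definition of Hausdorff dimension recalled in Section~\ref{HM}. Concretely, the goal is to show that for every $\rho$-cover $\{B_i\}$ of $F$ with $\rho \le r_0$ one has $\sum_i \mathrm{diam}(B_i)^s \ge 1/c$, since then $\H_\rho^s(F) \ge 1/c$ for all small $\rho$ and passing to the limit as $\rho\to 0$ finishes the job.

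First, I would fix such a $\rho$-cover and, for each index $i$ for which $B_i\cap F \neq \emptyset$, choose a point $x_i \in B_i \cap F$. Since any two points of a ball lie within its diameter of each other, $B_i \subset B(x_i, \mathrm{diam}(B_i))$, and this containing ball has radius $\mathrm{diam}(B_i) \le \rho \le r_0$. The hypothesis of the proposition therefore yields
$$\mu(B_i) \le \mu\bigl(B(x_i,\mathrm{diam}(B_i))\bigr) \le c\,\mathrm{diam}(B_i)^s.$$
For indices $i$ with $B_i \cap F = \emptyset$ the same inequality holds trivially, because $\mu$ is supported on $F$ and hence $\mu(B_i)=0$.

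Then I would sum these estimates: by countable subadditivity of $\mu$, the cover property $F \subset \bigcup_i B_i$, and $\mu(F)=1$,
$$1 = \mu(F) \le \sum_i \mu(B_i) \le c \sum_i \mathrm{diam}(B_i)^s,$$
so $\sum_i \mathrm{diam}(B_i)^s \ge 1/c$. Taking the infimum over all $\rho$-covers gives $\H_\rho^s(F) \ge 1/c$, and letting $\rho \to 0$ yields $\H^s(F) \ge 1/c > 0$, completing the argument.

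There is no serious obstacle here; the proposition is essentially the definition of Hausdorff measure married to the hypothesis on $\mu$. The only mild subtlety, handled automatically by the support assumption, is that cover balls disjoint from $F$ carry no mass and can be discarded. I would also remark in passing that the same argument works verbatim for covers by arbitrary sets of diameter at most $\rho$: one simply replaces each non-trivial cover set $U_i$ by the ball $B(x_i, \mathrm{diam}(U_i))$ centred at some $x_i \in U_i \cap F$.
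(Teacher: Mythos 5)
Your proof is correct and is the standard textbook argument for the Mass Distribution Principle; the paper itself gives no proof, simply citing Falconer's book, and your argument matches the one found there. The only point worth noting is that the quantitative conclusion $\H^s(F)\ge 1/c$ which you extract is actually slightly stronger than the dimension bound stated, which is a small bonus and exactly what Falconer's formulation records.
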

\subsection{Continued fractions and Diophantine approximation}
 Recall that the Gauss map $T: [0,1)\to [0,1)$ is defined by
\[T(0)=0, ~ T(x)=\frac{1}{x}-\floor*{\frac{1}{x}} \textmd{ for }x\in(0,1),\]
where $\lfloor x\rfloor$ denotes the integer part of $x$. 
We write $x:=[a_{1}(x),a_{2}(x),a_{3}(x),\ldots ]$ for the continued fraction of $x$ where $a_1(x)=\lfloor 1/x \rfloor$, $a_{n}(x)= a_1(T^{n-1}(x))$ for $n\ge2$ are called the partial quotients of $x$. 
The sequences $p_n= p_n(x)$, $q_n= q_n(x)$, referred to as $n^{\rm th}$ convergents, has the recursive relation
\begin {equation}\label{recu}
p_{n+1}=a_{n+1}(x)p_n+p_{n-1}, \ \
q_{n+1}=a_{n+1}(x)q_n+q_{n-1},\ \  n\geq 0.
\end {equation}
Thus $p_n=p_n(x), q_n=q_n(x)$ are determined by the partial quotients $a_1,\dots,a_n$, so we may write $p_n=p_n(a_1,\dots, a_n), q_n=q_n(a_1,\dots,a_n)$. When it is clear which partial quotients are involved, we denote them by $p_n, q_n$ for simplicity.
For any integer vector $(a_1,\dots,a_n)\in \N^n$ with $n\geq 1$, write
\begin{equation*}\label{cyl}
I_n(a_1,\dots,a_n):=\left\{x\in [0, 1): a_1(x)=a_1, \dots, a_n(x)=a_n\right\}
\end{equation*}
for the corresponding `cylinder of order $n$', that is, the set of all real numbers in $[0,1)$ whose continued fraction expansions begin with $(a_1, \dots, a_n).$
We will frequently use the following well known properties of continued fraction expansions.  They are explained in the standard texts \cite{IosKra_book, Khi_63}.
\begin{proposition}\label{pp3} For any {positive} integers $a_1,\dots,a_n$, let $p_n=p_n(a_1,\dots,a_n)$ and $q_n=q_n(a_1,\dots,a_n)$ be defined recursively by \eqref{recu}. {Then:}
\begin{enumerate}[label={\rm (\subscript{\rm P}{\arabic*})}]
\item One has
\begin{eqnarray*}
I_n(a_1,a_2,\dots,a_n)= \left\{
\begin{array}{ll}
         \left[\frac{p_n}{q_n}, \frac{p_n+p_{n-1}}{q_n+q_{n-1}}\right)     & {\rm if }\ \
         n\ {\rm{is\ even}};\\
         \left(\frac{p_n+p_{n-1}}{q_n+q_{n-1}}, \frac{p_n}{q_n}\right]     & {\rm if }\ \
         n\ {\rm{is\ odd}}.
\end{array}
        \right.
\end{eqnarray*}
{\rm Its length is given by}
\begin{equation*}\label{lencyl}
\frac{1}{2q_n^2}\leq |I_n(a_1,\ldots,a_n)|=\frac{1}{q_n(q_n+q_{n-1})}\leq \frac1{q_n^2}.
\end{equation*}

\item For any $n\geq 1$, $q_n\geq 2^{(n-1)/2}$ and
$$
1\le \frac{q_{n+m}(a_1,\ldots,a_n, b_1,\ldots, b_m)}{q_n(a_1,\ldots, a_n)\cdot q_m(b_1,\ldots,b_m)}\le 2.
$$
\item $$\prod_{i=1}^na_i\leq q_n\leq \prod_{i=1}^n(a_i+1)\leq 2^n\prod_{i=1}^na_i.$$
\item \begin{equation*}\label{p7}
\frac{1}{3a_{n+1}(x)q^2_n(x)}\, <\, \Big|x-\frac{p_n(x)}{q_n(x)}\Big|=\frac{1}{q_n(x)(q_{n+1}(x)+T^{n+1}(x)  q_n(x))}\, < \,\frac{1}{a_{n+1}q^2_n(x)},
\end{equation*}
and for any $n\geq1$, the derivative of $T^n$ is given by
$$
(T^n)'(x) = \frac{  (-1)^n}{(x q_{n-1}-p_{n-1})^2}.
$$
\item There exists a constant $K>1$ such that for almost all $x\in [0,1)$, $$
q_n(x)\le K^n, \ {\text{for all $n$ sufficiently large}}.
$$
\end{enumerate}
\end{proposition}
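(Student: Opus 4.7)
The plan is to derive all five properties from one algebraic setup plus a single dynamical input. Writing $M(a) = \begin{pmatrix} a & 1 \\ 1 & 0 \end{pmatrix}$, an induction based on the recursion \eqref{recu} gives $M(a_1)M(a_2)\cdots M(a_n) = \begin{pmatrix} p_n & p_{n-1} \\ q_n & q_{n-1} \end{pmatrix}$, whose determinant yields the key identity $p_n q_{n-1} - p_{n-1} q_n = (-1)^n$. The second ingredient is the parametrisation $x = (p_n + p_{n-1} T^n(x))/(q_n + q_{n-1} T^n(x))$ valid for every $x \in I_n(a_1,\ldots,a_n)$, which is just the identity $x = [a_1,\ldots,a_n, 1/T^n(x)]$ rewritten. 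Essentially every formula in the proposition flows from these two facts.

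For (P$_1$) and the derivative formula of (P$_4$), as $t := T^n(x)$ sweeps over $[0,1)$ the fractional linear map $t\mapsto (p_n + p_{n-1}t)/(q_n + q_{n-1}t)$ is monotonic with derivative $(-1)^n/(q_n + q_{n-1}t)^2$, so $I_n$ is the interval between $p_n/q_n$ (at $t=0$) and $(p_n+p_{n-1})/(q_n+q_{n-1})$ (at $t=1$), oriented by the parity of $n$. Its length equals $|p_{n-1}q_n - p_n q_{n-1}|/(q_n(q_n+q_{n-1})) = 1/(q_n(q_n+q_{n-1}))$, and the bounds $1/(2q_n^2) \le |I_n| \le 1/q_n^2$ follow from $q_n \le q_n+q_{n-1} \le 2q_n$. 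For the derivative, a direct computation gives $xq_{n-1} - p_{n-1} = (-1)^n/(q_n + q_{n-1} T^n(x))$, so differentiating the inverse map and inverting yields $(T^n)'(x) = (-1)^n/(xq_{n-1}-p_{n-1})^2$.

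For (P$_2$), (P$_3$), and the approximation bound in (P$_4$), everything reduces to elementary induction together with the matrix identity. The bound $q_n \ge 2^{(n-1)/2}$ follows from $q_{n+1} \ge q_n + q_{n-1} \ge 2q_{n-1}$, doubling every two steps. The multiplicative estimate on $q_{n+m}(a_1,\ldots,a_n,b_1,\ldots,b_m)$ is obtained by reading the $(2,1)$-entry of the matrix product $M(a_1)\cdots M(a_n)\cdot M(b_1)\cdots M(b_m)$, which gives $q_{n+m} = q_n p_m(b_1,\ldots,b_m) + q_{n-1} q_m(b_1,\ldots,b_m)$; the stated two-sided bound then follows from $p_m \le q_m$ and $q_{n-1} \le q_n$. (P$_3$) is immediate from $q_n \ge a_n q_{n-1}$ on one side and $q_{n+1} \le (a_{n+1}+1)q_n$ (using $q_{n-1}\le q_n$) on the other. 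For the approximation bound, the parametrisation of paragraph one gives $x - p_n/q_n = (-1)^n T^n(x)/(q_n(q_n + q_{n-1}T^n(x)))$; substituting $1/T^n(x) = a_{n+1} + T^{n+1}(x)$ converts the denominator to $q_n(q_{n+1} + T^{n+1}(x)q_n)$, and the two displayed bounds follow from $a_{n+1}q_n \le q_{n+1} + T^{n+1}(x)q_n \le (a_{n+1}+2)q_n \le 3a_{n+1}q_n$.

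Finally, (P$_5$) is the only item requiring genuine dynamics. The Gauss map preserves the ergodic probability measure $d\mu = (\log 2)^{-1} dx/(1+x)$, and $\int_0^1 \log a_1(x)\, d\mu(x) < \infty$. Birkhoff's ergodic theorem then gives $\tfrac{1}{n}\sum_{k=1}^n \log a_k(x) \to \int \log a_1\, d\mu$ for $\mu$-a.e.\ $x$, and hence Lebesgue-a.e.\ $x$ since $\mu$ is equivalent to Lebesgue measure. Combined with the upper bound in (P$_3$), this yields $q_n(x) \le K^n$ for almost every $x$ and all large $n$, for any $K$ exceeding $2\exp\bigl(\int \log a_1 \, d\mu\bigr)$. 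No step is a serious obstacle here — every item is classical continued-fraction theory; the only real care lies in the sign and constant bookkeeping in (P$_1$) and (P$_4$), and in citing the correct ergodic facts for (P$_5$).
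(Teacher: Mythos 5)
The paper does not prove Proposition 2.5 --- it is stated as a collection of standard continued--fraction facts with a pointer to the textbooks of Iosifescu--Kraaikamp and Khintchine, so there is no proof in the paper against which to compare. Your self-contained derivation is correct in substance and organised sensibly: the matrix/determinant identity, the fractional-linear parametrisation of $I_n$ by $T^n(x)$, and a single appeal to Birkhoff's theorem for (P$_5$) do indeed yield all five items.

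One small bookkeeping point worth flagging: with the paper's recursion \eqref{recu} and the normalisation that makes $p_n/q_n=[a_1,\ldots,a_n]\in(0,1)$ (so $p_0=0$, $q_0=1$, $p_{-1}=1$, $q_{-1}=0$), the product $M(a_1)\cdots M(a_n)$ with $M(a)=\bigl(\begin{smallmatrix}a&1\\1&0\end{smallmatrix}\bigr)$ equals $\bigl(\begin{smallmatrix}q_n&q_{n-1}\\p_n&p_{n-1}\end{smallmatrix}\bigr)$, not $\bigl(\begin{smallmatrix}p_n&p_{n-1}\\q_n&q_{n-1}\end{smallmatrix}\bigr)$; correspondingly the determinant identity reads $p_nq_{n-1}-p_{n-1}q_n=(-1)^{n-1}$, and the $(1,1)$-entry of the block product gives $q_{n+m}=q_nq_m(b_1,\ldots,b_m)+q_{n-1}p_m(b_1,\ldots,b_m)$, from which the two-sided bound in (P$_2$) follows immediately via $p_m\le q_m$ and $q_{n-1}\le q_n$. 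As you wrote it, the matrix corresponds to the alternative normalisation $p_0=1$, $q_0=0$ (the ``$a_1$ is the integer part'' convention), in which $p_m\ge q_m$, and the inequality you invoke points the wrong way for the upper bound. None of this affects the final formulas in (P$_1$) and (P$_4$), which are sign- and convention-independent after taking absolute values and squares, but the convention should be made consistent with \eqref{recu} before the argument is used verbatim. Everything else, including the Birkhoff argument for (P$_5$) (finiteness of $\int\log a_1\,d\mu$, then $q_n\le 2^n\prod a_i$ from (P$_3$)), is fine.
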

Let $\mu$ be the Gauss measure given by $$
d\mu=\frac{1}{(1+x)\log 2}dx.
$$ It is clear that $\mu$ is $T$-invariant and equivalent to Lebesgue measure $\mathcal{L}$.
The next proposition concerns the position of a cylinder in $[0,1)$.
\begin{proposition}[{\cite[Khintchine]{Khi_63}}]\label{pp2} Let $I_n=I_n(a_1,\dots, a_n)$ be a cylinder of order $n$, which is partitioned into sub-cylinders $\{I_{n+1}(a_1,\dots,a_n, a_{n+1}): a_{n+1}\in \N\}$. When $n$ is odd, these sub-cylinders are positioned from left to right, as $a_{n+1}$ increases from 1 to $\infty$; when $n$ is even, they are positioned from right to left.
\end{proposition}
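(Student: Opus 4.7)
My plan is to derive the ordering of sub-cylinders directly from the recursion $p_{n+1}=a_{n+1}p_n+p_{n-1},\;q_{n+1}=a_{n+1}q_n+q_{n-1}$ together with the standard identity $p_{n+1}q_n - p_n q_{n+1} = (-1)^n$. The sub-cylinder $I_{n+1}(a_1,\ldots,a_n,a_{n+1})$ is, by part (P$_1$) of Proposition \ref{pp3}, one of the two intervals with endpoints $p_{n+1}/q_{n+1}$ and $(p_{n+1}+p_n)/(q_{n+1}+q_n)$, and these endpoints depend on $a_{n+1}$ only through the above recursion. So the question reduces to analysing how these two endpoints move as $a_{n+1}$ ranges over $\N$.

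The first step is to view $p_{n+1}/q_{n+1} = (a_{n+1}p_n + p_{n-1})/(a_{n+1}q_n + q_{n-1})$ as a rational function of the variable $a_{n+1}$ and differentiate (or equivalently compare two consecutive values), getting
\[
\frac{d}{da_{n+1}}\frac{p_{n+1}}{q_{n+1}} \;=\; \frac{p_n q_{n-1} - q_n p_{n-1}}{(a_{n+1}q_n + q_{n-1})^2} \;=\; \frac{(-1)^{n-1}}{(a_{n+1}q_n + q_{n-1})^2}.
\]
Hence the endpoint $p_{n+1}/q_{n+1}$ is strictly monotonic in $a_{n+1}$: strictly increasing when $n$ is odd, strictly decreasing when $n$ is even. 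Exactly the same computation, applied to the other endpoint $(p_{n+1}+p_n)/(q_{n+1}+q_n)$ (which is the same expression with $a_{n+1}$ replaced by $a_{n+1}+1$), shows it has the same monotonic behaviour as $a_{n+1}$ varies.

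The second step is to pin down the two extreme sub-cylinders and show that they match the two endpoints of the parent cylinder $I_n$. Plugging $a_{n+1}=1$ gives $p_{n+1}/q_{n+1} = (p_n+p_{n-1})/(q_n+q_{n-1})$, which is precisely the endpoint of $I_n$ opposite to $p_n/q_n$ by (P$_1$); and letting $a_{n+1}\to\infty$ drives $p_{n+1}/q_{n+1}\to p_n/q_n$, the other endpoint. Moreover the sub-cylinders $I_{n+1}(\ldots,a_{n+1})$ are pairwise disjoint and (together with the rational endpoints) exhaust $I_n$, since any $x\in I_n$ has a uniquely determined next partial quotient $a_{n+1}(x)$.

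Combining these, when $n$ is odd both endpoints of $I_{n+1}$ strictly increase with $a_{n+1}$ and sweep $I_n$ from its left endpoint $(p_n+p_{n-1})/(q_n+q_{n-1})$ up to $p_n/q_n$, giving the left-to-right arrangement; when $n$ is even both endpoints strictly decrease with $a_{n+1}$ and sweep from $(p_n+p_{n-1})/(q_n+q_{n-1})$ down to $p_n/q_n$, giving the right-to-left arrangement. I do not expect a real obstacle here: the only point to handle cleanly is matching the endpoint conventions (open versus closed ends) from (P$_1$) across the parities of $n$ and $n+1$, so that consecutive sub-cylinders abut at the point $(p_{n+1}+p_n)/(q_{n+1}+q_n)$ for one value of $a_{n+1}$ and equal $p_{n+1}/q_{n+1}$ for the next.
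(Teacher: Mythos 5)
The paper does not supply its own proof of Proposition \ref{pp2}; it is stated with a citation to Khintchine's book. So there is no in-paper argument to compare against. Your proposal is a correct and essentially standard derivation: the sign of $p_n q_{n-1}-p_{n-1}q_n=(-1)^{n-1}$ controls the monotonicity of $a_{n+1}\mapsto p_{n+1}/q_{n+1}$, the boundary cases $a_{n+1}=1$ and $a_{n+1}\to\infty$ recover the two endpoints of $I_n$ given by (P$_1$), and consecutive sub-cylinders abut because $p_{n+1}+p_n$, $q_{n+1}+q_n$ are exactly the numerator and denominator obtained by bumping $a_{n+1}$ to $a_{n+1}+1$. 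The only small point worth spelling out, as you already flag, is that for a given parity of $n$ the sub-cylinders $I_{n+1}$ all have the opposite parity, so they are all half-open on the same side; combined with the abutment identity this gives a genuine disjoint partition of $I_n$ (minus the rational endpoints) rather than overlaps or gaps.
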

The following result is due to {\L}uczak \cite{Luczak}.
\begin{lemma}[{\cite[ {\L}uczak]{Luczak}}]\label{lemb}For any $b, c>1$, the sets
\begin{align*}
&\left\{x\in[0, 1):  a_{n}(x)\ge c^{b^n}\  {\text{for infinitely many}} \ n\in \N       \right\},\\
&\left\{x\in[0, 1):  a_{n}(x)\ge c^{b^n}\  {\text{for all }} \ n\geq 1 \right\},
\end{align*}
have the same Hausdorff dimension $\frac1{b+1}$.
\end{lemma}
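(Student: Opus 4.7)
The plan is to sandwich both Hausdorff dimensions at $\tfrac{1}{b+1}$. Since the set ``$a_n\ge c^{b^n}$ for all $n\ge 1$'' is contained in ``$a_n\ge c^{b^n}$ infinitely often'', it suffices to establish the lower bound $\ge\tfrac{1}{b+1}$ for the smaller set and the matching upper bound $\le\tfrac{1}{b+1}$ for the larger one.

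For the lower bound I would construct a Cantor-type subset $F$ of the ``for all $n$'' set by restricting $a_n(x)\in[c^{b^n},2c^{b^n}]$ at every level, yielding approximately $c^{b^n}$ admissible choices per stage. By Proposition~\ref{pp3}, each level-$n$ cylinder has diameter $\ell_n\asymp q_n^{-2}\asymp c^{-2\sum_{k\le n}b^k}$ and there are $N_n\asymp c^{\sum_{k\le n}b^k}$ of them; by Proposition~\ref{pp2}, inside any parent $I_{n-1}$ the admissible children accumulate near one endpoint within a ``cluster'' of length only $\asymp\ell_{n-1}/c^{b^n}$. Place the equidistributed (Bernoulli) measure $\mu$ on $F$ (mass $1/N_n$ per level-$n$ cylinder) and apply the Mass Distribution Principle (Proposition~\ref{p1}). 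The binding scale is a ball of radius $r\asymp\ell_{n-1}/c^{b^n}$, which can contain at most one cluster, so $\mu(B(x,r))\le 1/N_{n-1}$; requiring $1/N_{n-1}\le Cr^s$ reduces, using $\sum_{k<n}b^k\sim b^n/(b-1)$, to $s(b+1)\le 1$, i.e.\ $s\le\tfrac{1}{b+1}$. Checking the other ranges of $r$ (smaller radii that sit inside a single cluster, or larger radii that span clusters of several parents at a coarser level) yields the same constraint, so $\hdim F\ge\tfrac{1}{b+1}$.

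For the upper bound, write $E_1=\limsup_n A_n$ with $A_n=\{a_n(x)\ge c^{b^n}\}$. Each $A_n$ is covered by the same cluster-intervals as above --- one subinterval of length $\asymp\ell_{n-1}/c^{b^n}$ per prefix $(a_1,\ldots,a_{n-1})$. The direct Borel--Cantelli sum (Proposition~\ref{bclem}) $\sum_{n\ge N}c^{-sb^n}\sum_{(a_1,\ldots,a_{n-1})}\ell_{n-1}^s$ has a divergent inner sum for $s\le\tfrac{1}{2}$, so a single-level cover is too crude. I would instead decompose the prefixes at level $n-1$ by the order of magnitude of $q_{n-1}$: for each dyadic band $q_{n-1}\in[Q,2Q]$, the number of admissible prefixes is bounded by the count of reduced fractions with denominator in $[Q,2Q]$, giving contribution $\lesssim Q^{2-2s}\cdot c^{-sb^n}$. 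When weighed against the super-exponential factor $c^{-sb^n}$, only those $Q$ matching the Cantor-scale $Q\asymp c^{\sum_{k<n}b^k/(b-1)}$ from the lower-bound computation contribute non-negligibly; bands with atypically small or large $q_{n-1}$ are controlled by pressure/Ruelle estimates for the Gauss map and shown to contribute sub-leading amounts. Telescoping the bound and letting $N\to\infty$ yields $\hdim E_1\le\tfrac{1}{b+1}$.

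The principal obstacle is the upper bound: a naive one-level covering only achieves $\hdim E_1\le\tfrac{1}{2}$, and reaching the sharp exponent $\tfrac{1}{b+1}$ requires a dyadic split on the size of $q_{n-1}$ that funnels the estimate through precisely the same MDP balance $s(b+1)=1$ realized in the lower bound.
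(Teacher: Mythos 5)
The paper does not actually prove this lemma; it cites it as \cite{Luczak}, so there is no in-text proof to compare against, and I will instead assess your argument on its own terms.

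Your lower bound sketch is sound and is the standard route: restrict $a_n\in[c^{b^n},2c^{b^n}]$ at every level, distribute Bernoulli mass, and run the Mass Distribution Principle. The critical-scale balance you exhibit ($N_{n-1}^{-1}\le Cr^s$ with $r\asymp\ell_{n-1}/c^{b^n}$, $N_{n-1}\asymp q_{n-1}\asymp c^{\sum_{k<n}b^k}$) does reduce to $s(b+1)\le 1$, and checking the remaining scales of $r$ is routine. This part I believe.

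The upper bound has a genuine gap. You correctly note that the one-level Borel--Cantelli cover of $A_n=\{a_n\ge c^{b^n}\}$ by all clusters $J_{n-1}(a_1,\dots,a_{n-1})$ fails, because $\sum_{(a_1,\dots,a_{n-1})}q_{n-1}^{-2s}$ diverges for $s\le 1/2$. But your fix --- a dyadic split on $q_{n-1}$ at the same fixed level $n$ --- does not repair this. For the band $q_{n-1}\in[Q,2Q]$ the number of admissible prefixes of length $n-1$ is at most $O(Q^2)$ (and in fact roughly $Q(\log Q)^{n-2}$ for $n$ fixed and $Q$ large), so the band's $s$-volume is $\lesssim Q^{1-2s}(\log Q)^{n-2}c^{-sb^n}$. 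Summing over $Q$ still diverges for $s\le 1/2$: the divergence of the naive sum is caused precisely by the large-$q_{n-1}$ bands, and the claim that ``bands with atypically small or large $q_{n-1}$ are controlled by pressure/Ruelle estimates and shown to contribute sub-leading amounts'' is exactly the step that fails. Pressure estimates give $\sum q_{n-1}^{-2s}\asymp e^{(n-1)\mathsf{P}(T,-s\log|T'|)}$ only when $s>1/2$; for $s\le 1/2$ the pressure is $+\infty$ and there is nothing to invoke. Consequently, every single-level covering --- with or without the dyadic refinement --- tops out at $\hdim\le 1/2$ and cannot reach $1/(b+1)$.

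The missing idea, present in {\L}uczak's proof (and in the Wang--Wu upper-bound machinery for the doubly-exponential regime), is to exploit the ``infinitely often'' structure across several hit times: one does not cover $A_n$ in isolation, but rather decomposes $E_1$ according to consecutive hit times (or a stopping-time at the second hit past a threshold $N$), using the earlier hit $a_m\ge c^{b^m}$ to constrain $q_{n-1}$ from below and the choice of cover level to adapt to the size of the intervening partial quotients. That cross-level bookkeeping is what funnels the balance down to $s(b+1)=1$; without it the upper bound is not established.
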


\subsection{Pressure function}\label{Pressure Functions}

When dealing with the Hausdorff dimension problems in non-linear dynamical systems, pressure functions and other concepts from thermodynamics are good tools. The concept of a general pressure function was introduced by Ruelle in \cite{Ruelle} as a generalisation of entropy which describes the exponential growth rate of ergodic sums.   We are interested in a way of obtaining the Hausdorff dimension of certain sets using the pressure functions.
A method in \cite[Theorem 2.2.1]{Pollicott2005} can be used to calculate the Hausdorff dimension of self-similar sets for linear system. As for the non-linear setting, the relation between Hausdorff dimension and pressure functions is given in \cite{Pollicott2005} as the corresponding generalisation of Moran \cite{Moran}. 
More details and context on pressure functions can be found in \cite{MaUr96,MaUr99,MaUr03}. We use the fact that the pressure function with a continuous potential can be approximated by the pressure function restricted to the sub-systems in continued fractions.

Let $\mathcal{A}\subset\mathbb{N}$ be a finite or infinite set
and define 
\begin{equation*}
X_{\mathcal{A}}=\{x\in [0,1):a_{n}(x)\in 
\mathcal{A}\,\, {\text{for all}}\ n\geq 1 \}.
\end{equation*}
Then $(X_{\mathcal{A}},T)$ is a subsystem of $([0,1),T)$ where $T$ is a
Gauss map. Given any real function $\psi:[0,1)\rightarrow \mathbb{R},$ the pressure function restricted to
the system $(X_{\mathcal{A}},T)$ is defined by 
\begin{equation}\label{5}
\mathsf{P}_{\mathcal{A}}(T,\psi):=\lim_{n\rightarrow \infty }\frac{1}{n}\log \sum_{a_{1},\ldots ,a_{n}\in \mathcal{A}}\sup_{x\in X_{\mathcal{A}}}e^{S_{n}\psi([a_{1},\ldots ,a_{n}+x])},  
\end{equation}
where $S_{n}\psi (x)$ denotes the ergodic sum $\psi(x)+\cdots
+\psi (T^{n-1}x)$. For simplicity, we denote $\mathsf{P}_{\mathbb{N}}(T,\psi)$ by $
\mathsf{P}(T,\psi )$ when $\mathcal{A}=\mathbb{N}$. We note that the supremum in equation \eqref{5} can be removed if $\psi$ satisfy the continuity property.
For each $n\geq1$, the $n^{\rm th}$ variation of $\psi$ is denoted by 
\begin{equation*}
{\mathop{\rm{Var}}}_{n}(\psi):=\sup\Big\{|\psi(x)-\psi(y)|:
I_n(x)=I_n(y)\Big\},
\end{equation*}
where $I_n$ is defined in \eqref{cyl}.\\
The following result \cite[Proposition 2.4]{LiWaWuXu14} ensure the existence of the limit in the equation (\ref{5}).
\begin{proposition}[{\protect\cite[Li-Wang-Wu-Xu]{LiWaWuXu14}}]\label{prop26}
\label{pp1} Let $\psi:[0,1)\to \mathbb{R}$ be a real function with $
\mathrm{Var}_1(\psi)<\infty$ and $\mathrm{Var}_{n}(\psi)\to 0$ as $
n\to \infty$. Then the limit defining $\mathsf{P}_\mathcal{A}(T,\psi)$
exists and the value of $\mathsf{P}_\mathcal{A}(T,\psi)$ remains the same
even without taking supremum over $x\in X_\mathcal{A}$ in \eqref{5}.
\end{proposition}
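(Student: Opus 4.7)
The plan is to combine a variation estimate (controlling how $S_n\psi$ fluctuates within an $n$-cylinder) with a Fekete-type limit argument that tolerates an $o(n)$ defect. Set $Z_n := \sum_{a_1,\ldots,a_n\in\mathcal{A}}\sup_{x\in X_\mathcal{A}}e^{S_n\psi([a_1,\ldots,a_n+x])}$; the goal is to show $\tfrac{1}{n}\log Z_n$ converges and that the supremum may be replaced by evaluation at any fixed point of $X_\mathcal{A}$.

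The quantitative input from the hypothesis is a variation bound for Birkhoff sums. If $y,z \in I_n(a_1,\ldots,a_n)$, then $T^k y$ and $T^k z$ both lie in $I_{n-k}(a_{k+1},\ldots,a_n)$, hence
\[
|S_n\psi(y) - S_n\psi(z)| \le \sum_{j=1}^n \mathrm{Var}_j(\psi) =: C_n,
\]
and $\mathrm{Var}_n(\psi)\to 0$ together with Cesaro averaging forces $C_n = o(n)$. This bound already delivers the second assertion of the proposition: for any fixed $x_0 \in X_\mathcal{A}$, replacing the inner supremum by $e^{S_n\psi([a_1,\ldots,a_n+x_0])}$ changes each summand by a factor between $e^{-C_n}$ and $e^{C_n}$, which is negligible after taking $\tfrac{1}{n}\log$ in the limit.

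For convergence, I would decompose each word $(a_1,\ldots,a_{n+m})$ into a prefix of length $n$ and a suffix of length $m$. Writing $y := [a_{n+1},\ldots,a_{n+m}+x]$ gives the exact identity
\[
S_{n+m}\psi([a_1,\ldots,a_{n+m}+x]) = S_n\psi([a_1,\ldots,a_n+y]) + S_m\psi(y).
\]
As $x$ varies, $y$ stays inside $I_m(a_{n+1},\ldots,a_{n+m})$, so the variation bound applied to the first summand yields, after summing over all admissible words,
\[
e^{-C_n} Z_n Z_m \le Z_{n+m} \le e^{C_n} Z_n Z_m,
\]
i.e.\ $\log Z_n$ is both subadditive and superadditive up to the $o(n)$ defect $C_n$.

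The final step is the standard defective Fekete argument: iterating the upper bound produces $\log Z_{kn} \le k\log Z_n + (k-1)C_n$, so $\limsup_{m\to\infty}\tfrac{1}{m}\log Z_m \le \tfrac{1}{n}\log Z_n + \tfrac{C_n}{n}$ for every $n$; letting $n\to\infty$ pushes the right-hand side down to $\liminf_{n\to\infty}\tfrac{1}{n}\log Z_n$ and forces convergence. The main obstacle is the bookkeeping in the variation step, where one must verify that the composition $[a_1,\ldots,a_n+y]$ really stays in $I_n(a_1,\ldots,a_n)$ for arbitrary $y \in [0,1)$ and that the iterated Gauss map realigns cylinders as claimed; once that is in place, both the Fekete step and the removal of the supremum are essentially mechanical.
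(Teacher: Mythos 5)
The paper does not prove this proposition; it is quoted verbatim from Li--Wang--Wu--Xu~\cite{LiWaWuXu14}, so there is no in-paper argument to compare against, and one should judge your proof against the standard thermodynamic-formalism argument (which is what that reference uses). Your proof is correct and is essentially that argument: the tempered-variation bound $|S_n\psi(y)-S_n\psi(z)|\le C_n:=\sum_{j=1}^n\mathrm{Var}_j(\psi)=o(n)$ for $y,z$ sharing an $n$-cylinder, plus almost-(sub/super)multiplicativity of $Z_n$, plus defective Fekete. The cylinder bookkeeping you flag does check out: $T^n[a_1,\ldots,a_{n+m}+x]=[a_{n+1},\ldots,a_{n+m}+x]=:y\in X_{\mathcal{A}}$ and $[a_1,\ldots,a_n+y]=[a_1,\ldots,a_{n+m}+x]$, so $S_{n+m}\psi=S_n\psi+S_m\psi\circ T^n$ yields exactly your decomposition. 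Two small refinements are worth noting. First, the upper bound needs no defect at all: since $\sup_x(fg)\le(\sup_xf)(\sup_xg)$ and $y(x)\in X_{\mathcal{A}}$ whenever $x\in X_{\mathcal{A}}$, one gets clean submultiplicativity $Z_{n+m}\le Z_nZ_m$, and the $o(n)$ correction is genuinely needed only on the supermultiplicative side and for dropping the supremum. Second, when iterating the defective inequality to reach $\log Z_{kn}\le k\log Z_n+(k-1)C_n$, one must always split off the fixed-length-$n$ prefix (so the defect at each step is $C_n$, not $C_{jn}$); your displayed iterate is correct provided this is how the splitting is done, and the extension to arbitrary $m=kn+r$ is, as you say, mechanical once one splits off the length-$r$ piece first.
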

The system $([0,1),T)$ is approximated by its subsystems $(X_{\mathcal{A}},T)
$ then the pressure function has a continuity property in the system of
continued fractions. A detailed proof can be seen in \cite[Proposition 2]{HaMaUr} or \cite{LiWaWuXu14}.
\begin{proposition}[{\protect\cite[Hanus-Mauldin-Urba\'nski]{HaMaUr}}]
\label{l5} Let $\psi:[0,1)\to \mathbb{R}$ be a real function with $
\mathrm{Var}_1(\psi)<\infty$ and $\mathrm{Var}_{n}(\psi)\to 0$ as $n\to
\infty$. We have 
\begin{equation*}
\mathsf{P}_{\mathbb{N}}(T, \psi)=\sup\{\mathsf{P}_\mathcal{A}(T,\psi): 
\mathcal{A}\ \mathrm{is \ a \ finite\ subset\ of }\ \mathbb{N}\}.
\end{equation*}
\end{proposition}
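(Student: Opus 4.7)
\emph{Plan.} The inequality $\sup_{\mathcal A}\mathsf{P}_{\mathcal A}(T,\psi)\le\mathsf{P}_{\mathbb N}(T,\psi)$ is immediate: for any finite $\mathcal A\subset\mathbb N$, the partition function $Z_n(\mathcal A):=\sum_{a\in\mathcal A^n}\sup_{x\in X_{\mathcal A}}e^{S_n\psi([a+x])}$ is dominated term-by-term by $Z_n(\mathbb N)$, since $\mathcal A^n\subseteq\mathbb N^n$ and $X_{\mathcal A}\subseteq X_{\mathbb N}$. So the real content is the reverse inequality: given $\varepsilon>0$, produce a finite $\mathcal A_N=\{1,\dots,N\}$ with $\mathsf{P}_{\mathcal A_N}(T,\psi)\ge\mathsf{P}_{\mathbb N}(T,\psi)-\varepsilon$. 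Throughout, set $V_m:=\sum_{j=1}^{m}\mathrm{Var}_j(\psi)$; this is finite since $\mathrm{Var}_j(\psi)\le\mathrm{Var}_1(\psi)<\infty$, and $V_m/m\to 0$ by the Ces\`aro lemma applied to $\mathrm{Var}_j(\psi)\to 0$.

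The first step is a supermultiplicativity-up-to-variation bound: for any alphabet $\mathcal A$ and all $m,n\ge1$,
$$
Z_{m+n}(\mathcal A)\ \ge\ e^{-V_m}\,Z_m(\mathcal A)\,Z_n(\mathcal A).
$$
The key observation is $S_{m+n}\psi([a,b+x])=\sum_{k=0}^{m-1}\psi(T^k[a,b+x])+S_n\psi([b+x])$, and $T^k[a,b+x]$ lies in the cylinder $I_{m-k}(a_{k+1},\dots,a_m)$, so its $\psi$-value differs from $\psi(T^k[a+y])$ for any $y\in X_{\mathcal A}$ by at most $\mathrm{Var}_{m-k}(\psi)$. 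Summing over $k$, exponentiating, taking the relevant suprema, and summing over $(a,b)\in\mathcal A^m\times\mathcal A^n$ yields the bound. Iterating produces $Z_{km}(\mathcal A)\ge e^{-(k-1)V_m}Z_m(\mathcal A)^k$, so dividing by $km$ and sending $k\to\infty$ gives
$$\mathsf{P}_{\mathcal A}(T,\psi)\ \ge\ \tfrac{1}{m}\log Z_m(\mathcal A)-\tfrac{V_m}{m}\qquad\text{for every }m\ge 1.$$

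Next, at a fixed $m$, compare $Z_m(\mathcal A_N)$ with the truncated full-system sum $\widetilde Z_m(\mathcal A_N):=\sum_{a\in\mathcal A_N^m}\sup_{y\in X_{\mathbb N}}e^{S_m\psi([a+y])}$. The same variation bound yields $Z_m(\mathcal A_N)\ge e^{-V_m}\widetilde Z_m(\mathcal A_N)$, and $\widetilde Z_m(\mathcal A_N)\nearrow Z_m(\mathbb N)$ as $N\to\infty$ by monotone convergence (partial sums of nonnegative terms). To assemble when $\mathsf{P}_{\mathbb N}(T,\psi)<\infty$: fix $\varepsilon>0$, pick $m$ large so that $V_m/m<\varepsilon/4$, $\tfrac{1}{m}\log Z_m(\mathbb N)>\mathsf{P}_{\mathbb N}(T,\psi)-\varepsilon/4$, and $\tfrac{\log 2}{m}<\varepsilon/4$; then pick $N$ with $\widetilde Z_m(\mathcal A_N)\ge Z_m(\mathbb N)/2$. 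Chaining the two steps gives $\mathsf{P}_{\mathcal A_N}(T,\psi)\ge\mathsf{P}_{\mathbb N}(T,\psi)-\varepsilon$. The case $\mathsf{P}_{\mathbb N}(T,\psi)=+\infty$ is identical, taking $\tfrac{1}{m}\log Z_m(\mathbb N)$ as large as desired.

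The main technical point is the first step: establishing the supermultiplicativity up to the variation error $V_m$, and using $V_m/m\to 0$ so that this error is asymptotically negligible. After that, the proposition is essentially the pressure analogue of the fact that a nonnegative sum equals the supremum of its finite partial sums, and the remainder is a routine double-limit in $m$ and $N$.
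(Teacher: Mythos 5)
The paper does not prove this proposition at all; it cites the result to Hanus--Mauldin--Urba\'nski \cite{HaMaUr} and to \cite{LiWaWuXu14}, so there is no in-paper argument to compare against. Your proof is correct and self-contained, and it follows the standard route for this kind of continuity-of-pressure statement on a countable alphabet: (i) the easy direction by monotonicity of the partition functions $Z_n(\mathcal A)\le Z_n(\mathbb N)$; (ii) a supermultiplicativity-up-to-variation estimate $Z_{m+n}(\mathcal A)\ge e^{-V_m}Z_m(\mathcal A)Z_n(\mathcal A)$ obtained by the decomposition $S_{m+n}\psi([a,b+x])=\sum_{k=0}^{m-1}\psi(T^k[a,b+x])+S_n\psi([b+x])$ and a cylinder-variation comparison, which after iterating and using Prop.~\ref{prop26} (existence of the limit defining $\mathsf P_{\mathcal A}$) gives $\mathsf P_{\mathcal A}(T,\psi)\ge\frac1m\log Z_m(\mathcal A)-\frac{V_m}{m}$; (iii) a second variation bound $Z_m(\mathcal A_N)\ge e^{-V_m}\widetilde Z_m(\mathcal A_N)$ plus monotone convergence of the truncated sums $\widetilde Z_m(\mathcal A_N)\nearrow Z_m(\mathbb N)$; (iv) an $\varepsilon/4$--bookkeeping double limit in $m$ and $N$, using $V_m/m\to0$ (which indeed follows from $\mathrm{Var}_n(\psi)\to0$ by Ces\`aro and from $\mathrm{Var}_n(\psi)\le\mathrm{Var}_1(\psi)<\infty$, since the variation is nonincreasing in $n$). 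The only implicit dependence worth flagging is that in step (ii) you pass from a limit along the subsequence $km$ to the full limit, which requires the existence of the pressure limit; you are entitled to this via Proposition~\ref{prop26}, but it would be cleaner to say so explicitly. The treatment of the $\mathsf P_{\mathbb N}(T,\psi)=+\infty$ case is also fine (for each $m$, pick $N$ making $\frac1m\log\widetilde Z_m(\mathcal A_N)$ as large as one likes, then absorb the $2V_m/m$ loss). In short: correct proof, standard method; the paper simply delegates it to the references.
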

Now we consider the specific potentials,
$$
\psi_{1}(x)=-s \log | T'(x)| - s \log B
$$
and
$$
\psi_{2}(x)=-s \log | T'(x)| - s \log B_1 + (1-s) \log B_2
$$
for some $1 < B,B_1,B_2 < \infty$ and $s\ge0 $. Note that if we let $B= \beta_0$ and $B_1=\beta_i, B_2 = \beta_{i-1}$ for $i=1,\ldots,m-1$, then we will have potential functions used in the formulation of the main result. It is clear that $\psi_{1}$ and $\psi_2$ satisfy the variation condition and then Proposition \ref{l5} holds. 

Thus, the pressure function \eqref{5} with potential $\psi_{1}$ is represented by 
\begin{align*}  \label{gddd}
\mathsf P_{\mathcal{A}} \Bigl(T, -s(\log B+\log |T^{\prime }(x)|) \Bigl)&=\lim_{n\to\infty}\frac{1}{n} \log \sum_{a_1,\ldots,a_n \in \mathcal{A}} e^{ S_n (-s(\log B+\log|T^{\prime }(x)|) ) }  \notag \\
&=\lim_{n\to\infty} \frac1n \log
\sum_{a_1,\ldots,a_n \in \mathcal{A}}\left(\frac1{B^{n}q_n^2}\right)^s,
\end{align*}
where we also used Proposition \ref{prop26}.
The last equality holds by
\begin{equation*}
{S_n (-s(\log B+\log |T^{\prime }(x)|)) }={-ns\log B-s\log q_{n}^{2}}.
\end{equation*}
which is easy to check by Proposition \ref{pp3}.
As before, we obtain the pressure function with potential $\psi_{2}$ by
\begin{align*}
\mathsf P_{\mathcal{A}}(T, -s\log|T^{\prime }(x)|-s\log B_{1}+(1-s)\log B_{2})&=\lim_{n\to\infty}
\frac1n \log \sum_{a_1,\ldots,a_n \in \mathcal{A}} e^{ S_n (-s\log|T^{\prime }(x)|-s\log B_{1}+(1-s)\log B_{2}) }  \notag \\
&=\lim_{n\to\infty} \frac1n \log
\sum_{a_1,\ldots,a_n \in \mathcal{A}}\left(\frac1{B_{1}^{n}q_n^2}\right)^sB_{2}^{(1-s)n}.
\end{align*}
For any $n\geq1$ and $s\geq0,$ we write
\begin{align*}\label{gnro}
f_{n}^{(1)}\left( s \right)&:=\sum_{a_{1},\ldots ,a_{n}\in \mathcal{A}}\frac{1}{
\left( B^{n}q_{n}^{2}\right) ^{s }} , \\
f_{n}^{(2)}\left( s \right)&:=\sum_{a_1,\ldots,a_n \in \mathcal{A}}\left(\frac1{B_{1}^{n}q_n^2}\right)^sB_{2}^{(1-s)n}.
\end{align*}
and denote
\begin{align*}
& s_{n,B}\left( \mathcal{A}\right) =\inf \left\{ s \geq 0:f_{n}^{(1)}\left( s
\right) \leq 1\right\}, \quad& & g_{n,B_{1},B_{2}}\left( \mathcal{A}\right) =\inf \left\{ s \geq 0:f_{n}^{(2)}\left( s
\right) \leq 1\right\}, \\
& s_{B}(\mathcal{A})=\inf \left\{s\geq 0 :\mathsf{P}_{\mathcal{A}}(T, \psi_{1})\le 0\right \}, \quad & &g_{B_{1},B_{2}}(\mathcal{A})=\inf \{s\geq 0 :\mathsf{P}_{\mathcal{A}}(T, \psi_{2})\le 0\}, \\
& s_{B}(\mathbb{N})=\inf \left\{s\geq 0 :\mathsf{P}(T, \psi_{1})\le 0 \right\}, \quad & &g_{B_{1},B_{2}}(\mathbb{N})=\inf \left\{s\geq 0 :\mathsf{P}(T, \psi_{2})\le 0\right\}.
\end{align*}
If $\mathcal{A}\in\mathbb{N}$ is a finite set, then by \cite{WaWu08} it is
straightforward to check that both $f_{n}^{(i)}\left( s\right) $ and $\mathsf{P}_{
\mathcal{A}}(T, \psi_{i})$ for $i=1,2$ are monotonically decreasing and
continuous with respect to $s$. Thus, $s_{n,B}\left( \mathcal{A}\right)$, $s_{B}(
\mathcal{A})$, $g_{n,B_{1},B_{2}}\left( \mathcal{A}\right)$ and $g_{B_{1},B_{2}}(
\mathcal{A})$ are, respectively, the unique solutions of $f_{n}^{(1)}\left( s\right)
= 1 $, $\mathsf{P}_{\mathcal{A}}(T, \psi_{1})= 0$, $f_{n}^{(2)}\left( s\right)
= 1 $ and $\mathsf{P}_{\mathcal{A}}(T, \psi_{2})= 0.$
For simplicity, when $\mathcal{A}=\left\{ 1,2,\ldots ,M
\right\}$ for some $M>0$, we write $s_{n,B}\left({M }\right) $ for $
s_{n,B}\left( \mathcal{A}\right) $, $s_{B}\left({M }\right) $ for $
s_{B}\left( \mathcal{A}\right) $, $g_{n,B_{1},B_{2}}\left({M }\right) $ for $
g_{n,B_{1},B_{2}}\left( \mathcal{A}\right) $ and $g_{B_{1},B_{2}}\left({M }\right) $ for $
g_{B_{1},B_{2}}\left( \mathcal{A}\right) $. When $\mathcal{A}=\mathbb{N}$, we write $s_{n,B}$ for $
s_{n,B}(\mathbb{N})$, $s_{B}$ for $
s_{B}(\mathbb{N})$, $g_{n,B_{1},B_{2}}$ for $
g_{n,B_{1},B_{2}}(\mathbb{N})$ and $g_{B_{1},B_{2}}$ for $
g_{B_{1},B_{2}}(\mathbb{N})$.
As a consequence, we have 
\begin{corollary} \label{cor2.1}For any integer $M\in\N,$
\label{p2} 
\begin{equation*}
\lim_{n\to \infty}s_{n,B}(M)=s_{B}(M), \ \ \lim_{M\to \infty}s_{B}(M)=s_{B}, \ \ \lim_{n\to \infty}g_{n,B_{1},B_{2}}(M)=g_{B_{1},B_{2}}(M),\ \ \lim_{M\to \infty}g_{B_{1},B_{2}}(M)=g_{B_{1},B_{2}},
\end{equation*}
where $s_B$ and $g_{B_1,B_2}$ are defined in \eqref{pres1} and \eqref{pres2} respectively.\\
Note that,  $s_{B}$ and $g_{B_{1},B_{2}}$ are continuous respectively as a function of $B$ and $B_{1},B_{2}$. Moreover,
$$\lim_{B\rightarrow 1}s_{B}=1,\ \ \lim_{B\rightarrow\infty}s_{B}=1/2.$$
\end{corollary}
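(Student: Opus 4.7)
The plan is to separate the corollary into three qualitatively different assertions and treat them in order: (i) the four inner limits that convert finite-truncation quantities to the full pressure, (ii) the continuity of $s_B$ and $g_{B_1,B_2}$ in their parameters, and (iii) the two boundary values $\lim_{B\to 1}s_B=1$ and $\lim_{B\to\infty}s_B=1/2$. The unifying engine throughout is strict monotonicity and continuity of $\mathsf{P}_{\mathcal{A}}(T,\psi_i)$ in $s$ on its domain of finiteness, so that uniqueness of the zero automatically converts convergence of pressures into convergence of zeros.

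For (i), I fix $M$ so that $\mathcal{A}=\{1,\ldots,M\}$ is a finite alphabet. Proposition~\ref{prop26} yields $\frac{1}{n}\log f_{n}^{(i)}(s)\to \mathsf{P}_{\mathcal{A}}(T,\psi_i)$ pointwise in $s$; since both sides are continuous and strictly decreasing in $s$ and each possesses a unique zero, a standard squeeze exploiting monotonicity in $s$ gives $s_{n,B}(M)\to s_B(M)$ and $g_{n,B_1,B_2}(M)\to g_{B_1,B_2}(M)$. The passage $M\to\infty$ then relies on Proposition~\ref{l5} of Hanus--Mauldin--Urba\'nski: nesting the finite sub-alphabets yields $\mathsf{P}_{\{1,\ldots,M\}}(T,\psi_i)\uparrow \mathsf{P}_{\N}(T,\psi_i)$ pointwise in $s$, hence $s_B(M)\uparrow s_B$ and $g_{B_1,B_2}(M)\uparrow g_{B_1,B_2}$ by the same uniqueness/monotonicity mechanism.

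For (ii), I would observe that $\psi_1$ and $\psi_2$ depend continuously on $B$ and on $(B_1,B_2)$ with ${\rm Var}_n$ uniformly controlled on any compact parameter set, which upgrades (via Proposition~\ref{prop26}) to joint continuity of the pressure in $(s,B)$ (resp.\ $(s,B_1,B_2)$). Combined with strict monotonicity in $s$, the implicitly defined zero $s_B$ (resp.\ $g_{B_1,B_2}$) depends continuously on the parameter by a standard implicit argument.

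The real obstacle is part (iii). The limit $B\to 1^+$ is short: the potential degenerates to $-s\log|T'|$, whose pressure vanishes exactly at $s=\hdim[0,1]=1$ by Bowen's formula, and continuity from (ii) then gives $s_B\to 1$. For $B\to\infty$, I would exploit the two-sided bound $\prod a_i\le q_n\le 2^n\prod a_i$ from Proposition~\ref{pp3} to sandwich
\[
(4B)^{-ns}\,\zeta(2s)^n \;\le\; f_n^{(1)}(s) \;\le\; B^{-ns}\,\zeta(2s)^n,
\]
so that $\log\bigl(\zeta(2s)/(4B)^s\bigr)\le \mathsf{P}(T,\psi_1)\le \log\bigl(\zeta(2s)/B^s\bigr)$. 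For any fixed $s>1/2$ the right side becomes negative once $B^s>\zeta(2s)$, forcing $s_B\le s$ for sufficiently large $B$; conversely the series $\zeta(2s)$ diverges for $s\le 1/2$, rendering the pressure $+\infty$ there, so $s_B>1/2$ for every $B$. Letting $s\downarrow 1/2$ in the upper sandwich bound delivers $s_B\to 1/2$ as $B\to\infty$.
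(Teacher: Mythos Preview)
Your proposal is correct. For parts (i) and (ii) it coincides with the paper's approach, which simply says that these ``are consequences of Proposition~\ref{l5}'' (together with Proposition~\ref{prop26}); you have merely spelled out the standard monotonicity/uniqueness-of-zero mechanism that turns convergence of pressures into convergence of their zeros, which the paper leaves implicit.

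The only genuine difference is in part (iii). The paper does not prove $\lim_{B\to 1}s_B=1$ and $\lim_{B\to\infty}s_B=1/2$ at all; it just cites \cite[Lemma~2.6]{WaWu08}. Your argument is self-contained: Bowen's formula for the Gauss map handles $B\to 1$, and for $B\to\infty$ you sandwich $f_n^{(1)}(s)$ between $(4B)^{-ns}\zeta(2s)^n$ and $B^{-ns}\zeta(2s)^n$ using $\prod a_i\le q_n\le 2^n\prod a_i$, which forces $s_B\in(1/2,\,s]$ once $B^s>\zeta(2s)$. This is in fact essentially how Wang--Wu prove their Lemma~2.6, so you have reproduced the cited argument rather than found a new one; the benefit is that your write-up is self-contained, at the cost of a few extra lines.
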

\begin{proof}
The last two equations are proved in \cite[Lemma 2.6]{WaWu08} and others are consequences of Proposition \ref{l5}.
\end{proof}
As before, we will set $B= \beta_0$ and $B_1=\beta_i, B_2 = \beta_{i-1}$ for $i=1,\ldots,m-1$ in Corollary \ref{cor2.1}, so that $s_B$ and $g_{B_1,B_2}$ will become $d_0$ and $d_i$ with $i=1,\ldots,m-1$ respectively.

\section{Hausdorff dimension of $S_m(A_0,\ldots,A_{m-1})$.}\label{cantor}

The proof of Theorem \ref{General} consists of two parts, the upper bound and the lower bound.  
For notational simplicity, we take $c_0=\cdots=c_{m-1}=1$ and the other case can be done with appropriate modifications. That is, we will be dealing with the set
$$S_m(A_0,\ldots,A_{m-1}) = \left\{ x\in[0,1): \,  A_i^n \le a_{n+i}(x) < 2 A_i^n, 0 \le i \le m-1,\,\text{for infinitely many } n\in\N \right\}.$$

\subsection{Upper bound.} 
At first, for each $n \ge 1$ and $(a_1,\ldots, a_{n-1})\in \N^{n-1}$ define
\begin{align*}
F_n  & = \left\{ x\in[0,1): \,  A_i^n \le a_{n+i}(x) < 2 A_i^n, \, 0 \le i \le m-1 \right\}  \\
 &= \bigcup_{a_1,\ldots,a_{n-1}\in\N} \left\{ x\in[0,1): a_j(x)=a_j, 1\le j < n , A_i^n \le a_{n+i}(x) < 2 A_i^n,\, 0 \le i \le m-1        \right\} \\
 &: =   \bigcup_{a_1,\ldots,a_{n-1}\in\N}   F_n(a_1,\ldots, a_{n-1} ).
\end{align*}
Then
\begin{align*}
S_m(A_0,\ldots,A_{m-1}) &=\bigcap\limits_{N=1}^{\infty}  \bigcup\limits_{n=N}^{\infty} F_n  =  \bigcap\limits_{N=1}^{\infty}  \bigcup\limits_{n=N}^{\infty}  \bigcup_{a_1,\ldots,a_{n-1}\in\N}   F_n(a_1,\ldots, a_{n-1} ).
\end{align*}
There are $m$ potential optimal covers for $F_{n}$ for each $n\ge N$. Define
$$
J_{n-1}(a_1,\ldots,a_{n-1})=  \bigcup\limits_{A_0^n\leq a_n <2A_0^n } I_{n}(a_1,\ldots,a_{n}).
$$
Next, for any $1 \le i \le m-1$ and any $A_i^n \le a_{n+i}(x) < 2 A_i^n$, define
$$
J_{n-1+i}(a_1,\ldots,a_{n-1+i})=  \bigcup\limits_{A_i^n\leq a_{n+i} <2A_1^i } I_{n+i}(a_1,\ldots,a_{n+i}).
$$
Then, by using Proposition \ref{pp3} and Proposition \ref{pp2} recursively, for every $0 \le i \le m-1$ we obtain
\begin{align*}
|J_{n-1+i}(a_1,\ldots,a_{n-1+i})|& =  \sum\limits_{A_i^n\leq a_{n+i} <2A_i^n } \left| \frac{p_{n+i}}{q_{n+i}} - \frac{p_{n+i}+p_{n-1+i}}{q_{n+i}+q_{n-1+i}}\right| \\ &\asymp \frac{1}{A_i^n q_{n-1+i}^2} \\
&\asymp 
\frac{1}{\beta_i^n\cdot \beta_{i-1}^n q_{n-1}^2}.
\end{align*}
Therefore, for covering by $J_{n-1}$, for any $\varepsilon>0$, the $(d_0+2\varepsilon)$-dimensional Hausdorff dimension of $ S_m(A_0,\ldots,A_{m-1})$ can be estimated as
\begin{equation*}
\begin{split}
\HH^{d_0+2\varepsilon}( S_m(A_0,\ldots,A_{m-1}))& \leq \liminf_{N\to\infty}   \sum\limits_{n=N}^\infty   \sum\limits_{a_1,\ldots,a_{n-1}}  |J_{n-1}(a_1,\ldots,a_{n-1})|^{d_0+2\varepsilon} \\
& \leq  \liminf_{N\to\infty}   \sum\limits_{n=N}^\infty  \frac{1}{2^{(n-1)\varepsilon}} \sum\limits_{a_1,\ldots,a_{n-1}} \left(\frac{1}{\beta_0^n q_{n-1}^{2}}\right)^{d_0}\\
&\le \liminf_{N\to\infty}   \sum\limits_{n=N}^\infty  \frac{1}{2^{(n-1)\varepsilon}}<\infty,
\end{split}
\end{equation*}
where we used that $\beta_{-1}=1$. Hence, from the definition of Hausdorff dimension it follows that
\begin{equation}\label{0case}
\hdim S_m \le d_0.
\end{equation}
For coverings by $J_{n-1+i}$ for $1 \le i \le m-1$, we get that $(d_i+2\varepsilon)$-dimensional Hausdorff dimension of $ S_m(A_0,\ldots,A_{m-1})$ can be estimated as
\begin{equation*}
\begin{split}
\HH^{d_i+2\varepsilon}( S_m(A_0,\ldots,A_{m-1}))& \leq \liminf_{N\to\infty}   \sum\limits_{n=N}^\infty   \sum\limits_{a_1,\ldots,a_{n-1}}   \sum\limits_{\substack{ A_j^n\leq a_{n+j} <2A_j^n \\ \text{for all } 0\le j \le i-1}} |J_{n-1+i}(a_1,\ldots,a_{n-1+i})|^{d_i+2\varepsilon} \\
& \leq  \liminf_{N\to\infty}   \sum\limits_{n=N}^\infty   \sum\limits_{a_1,\ldots,a_{n-1}}  \sum\limits_{\substack{ A_j^n\leq a_{n+j} <2A_j^n \\ \text{for all } 0\le j \le i-1}} \left(\frac{1}{\beta_i^n\cdot \beta_{i-1}^n q_{n-1}^2}\right)^{d_i+2\varepsilon}\\
&\le \liminf_{N\to\infty}   \sum\limits_{n=N}^\infty   \sum\limits_{a_1,\ldots,a_{n-1}} \beta_{i-1}^n \left(\frac{1}{\beta_i^n\cdot \beta_{i-1}^n q_{n-1}^2}\right)^{d_i+2\varepsilon}\\
& =  \liminf_{N\to\infty}   \sum\limits_{n=N}^\infty   \sum\limits_{a_1,\ldots,a_{n-1}}  \frac{\beta_{i-1}^{(1-d_i)n}}{(\beta_i^n  q_{n-1}^2)^{d_i}} \frac{1}{(\beta_i^n\cdot \beta_{i-1}^n  q_{n-1}^2)^{2\varepsilon}}\\
&\le \liminf_{N\to\infty}   \sum\limits_{n=N}^\infty  \frac{1}{2^{(n-1)\varepsilon}}<\infty. 
\end{split}
\end{equation*}
Thus, the upper bound is obtained immediately by combining the latter with \eqref{0case}, so
$$
\hdim S_m \le  \min_{0 \le i \le m-1} d_i.
$$

\subsection{Lower bound.} In this subsection we will determine the lower bound for the dimension of $ S_m(A_0,\ldots,A_{m-1})$ by using the mass distribution principle (Proposition \ref{p1}).


For convenience, let us define some dimensional numbers. For any integers $N,M$ and $0 \le i \le m-1$ define the dimensional number $\dd_i=\dd_{i,N}(M)$ as the solution to
\begin{equation}\label{defdd}
\sum\limits_{1\leq a_1,\ldots,a_{N}\leq M} \frac{\beta_{i-1}^N}{((\beta_i \beta_{i-1})^{N} q_{N}^{2})^{\dd_i}}=1.
\end{equation}
More specifically, each equation has a unique solution and, by Corollary \ref{cor2.1}, 
$$
\lim\limits_{M\to\infty} \lim\limits_{N\to\infty} \dd_{i,N}(M) = d_i.
$$
Take a sequence of large sparse integers $\{\ell_k\}_{k\geq1}$, say, $\ell_k \gg e^{\ell_1+\dots+\ell_{k-1}}.$ For any $\varepsilon>0$, choose integers $N,M$ sufficiently large such that
$$
 \dd_i>d_i-\varepsilon, \qquad \left(2^{(N-1)/2})\right)^{\varepsilon/2}\geq 2^{100}.
$$
Let 
\begin{equation}\label{sequence}
n_k-n_{k-1}=\ell_kN + m, \, \forall k\geq1,
\end{equation}
such that 
$$\left( 2^{\ell_k(N-1)/2} \right)^{\frac{\varepsilon}{2}} \ge  \prod\limits_{t=1}^{k-1} (M+1)^{\ell_t N}(\beta_{m-1})^{\sum_{i=1}^{t}\ell_{i}N+t}.$$

At this point, define a subset of $ S_m(A_0,\ldots,A_{m-1})$ as
\begin{align}
E = E_{M,N}=  \{ x\in[0,1): \,   A_i^{n_k} \le a_{n_k+i}(x) < 2 A_i^{n_k} &\text{ for all } k\geq1, \text{ for all } 0\le i \le m-1 \label{mainsubset}\\
& \text{ and } a_n(x)\in\{1,\ldots,M\} \text{ for other } n\in\N \}.\notag
\end{align}

Next we proceed to make use of a symbolic space.  Define $D_0=\{\emptyset\}$, and for any $n\geq1$, define 
 \begin{align*}
    D_n=\Bigg\{(a_1,\ldots, a_n)\in \N^n: A_i^{n_k}\le a_{n_k+i}&<2A_i^{n_k}, \ {\text{for all}} \ 0\le i\le m-1, k\ge 1 \ {\text{with}} \ {n_k+i}\le n;\\ &{\text{and}}\ a_j\in \{1,\ldots, M\}, \ {\text{for other $j\le n$}}\Bigg\}.
  \end{align*} This set is just the collection of the prefixes of the points in $E$.
Moreover, the collection of finite words of length $N$ is denoted by
$$
\UU = \{ w= (\sigma_1,\dots, \sigma_N): 1\leq \sigma_i \leq M, 1\leq i\leq N\}
$$
and, for the remainder of the paper we always use $w$ to denote an element from $\UU$. 

\subsubsection{Cantor structure of $E$.}
In this subsection, we depict the structure of $E$ with the help of symbolic space as mentioned above. For any $(a_1,\ldots, a_n)\in D_n$, define $$
J_n(a_1,\ldots,a_n)=\bigcup_{a_{n+1}: (a_1,\ldots,a_n, a_{n+1})\in D_{n+1}}I_{n+1}(a_1,\ldots,a_n, a_{n+1})
$$ and call it a {\em basic cylinder} of order $n$. More precisely, for any $k\ge 0$\begin{itemize}

\item when $n_k+m-1 \le n<n_{k+1}-1$ (by viewing $n_0=0$), $$
J_n(a_1,\ldots,a_n)=\bigcup_{1\le a_{n+1}\le M}I_{n+1}(a_1,\ldots,a_n, a_{n+1}).
$$
\item when $n=n_{k+1}+i-1$ for some $0\le i \le m-1$,
$$
J_n(a_1,\ldots,a_n)=\bigcup_{A_i^{n_{k+1}}\le a_{n+1}< 2 A_i^{n_{k+1}}}I_{n+1}(a_1,\ldots,a_n, a_{n+1}).
$$
\end{itemize}
Then we define the level $n$ of the Cantor set $E$ as
$$
\mathcal{F}_n=\bigcup_{(a_1,\ldots,a_n)\in D_n}J_n(a_1,\ldots,a_n).
$$ 
Consequently, the Cantor structure of $E$ is described as follows
$$
E=\bigcap_{n=1}^{\infty}\mathcal{F}_n=\bigcap_{n=1}^{\infty}\bigcup_{(a_1,\ldots,a_n)\in D_n}J_n(a_1,\ldots,a_n).
$$

We observe that every element $x\in E$ can be written as \begin{align*}
x=[w_1^{(1)},\ldots, w_{\ell_1}^{(1)}, a_{n_1},\ldots a_{n_1+m-1}, & w_1^{(2)},\ldots, w_{\ell_2}^{(2)}, a_{n_2},\ldots, a_{n_2+m-1},
\ldots,  w_1^{(k)},\ldots, w_{\ell_k}^{(k)}, a_{n_k},\ldots,a_{n_k+m-1},\ldots],
\end{align*} where $$
w^{(p)}_k\in \UU, \ {\text{and}}\ \ A_{i}^{n_k}\le a_{n_k+i}\le 2A_{i}^{n_k}, \ \text{ for all }  k,p\in\mathbb{N} \ \ 0\le i\le m-1.
$$
Then the length of cylinder set can be estimated as follows.

\begin{lemma}[Length estimation] Let $x\in E$ and $n_{k-1}+m-1 \leq n < n_{k}+m-1$ for some $k\ge 1$.
\begin{itemize}
\item for $n_{k-1}+m-1 \leq n < n_k -1 = n_{k-1} + k + \ell_k N$,
$$
\frac{1}{8q_n^2} \leq | J_n(x) | \leq \frac{1}{q_n^2}.
$$
\item for $n=n_k-1+i$ for some $0\le i \le m-1$, i.e. for $n_k \leq n \leq n_k+m-2$,
$$
| J_{n_k-1+ i}(x) | \geq \frac{1}{6\cdot 4^{i} \beta_{i}^{n_k} \beta_{i-1}^{n_k} q_{n_k-1}^2 } \geq  \frac{1}{6\cdot 4^{i} \beta_{i}^{n_k} \beta_{i-1}^{n_k}}  \left( \prod_{i=1}^{\ell_{k}}\frac{1}{q_N(w_i^{(k)})}\right)^{2(1+\varepsilon)}.
$$
\item for $n=n_k +m-1$,
$$
| J_{n_k+m-1}(x)| \ge   \frac{1}{6\cdot 4^{i} A_{m-1}^{n_k} \beta_{m-1}^{n_k} \beta_{m-2}^{n_k} q_{n_k-1}^2 }.
$$
  \item for each $1\le \ell<\ell_{k+1}$,
 \begin{align}\label{ff12}|J_{n_k+m-1+\ell N}(x)|&\ge \frac{1}{2^3}\cdot \left(\frac{1}{2^{2\ell}}\cdot \prod_{i=1}^{\ell}\frac{1}{q_N^2(w_i^{(k+1)})}\right)\cdot \frac{1}{q^2_{{n_k}+1}} \ge  \left( \prod_{i=1}^{\ell}\frac{1}{q_N^2(w_i^{(k+1)})}\right)^{1+\varepsilon}\cdot \frac{1}{q^2_{{n_k}+1}}. \end{align} 
\,
\item for $n_k+1+(\ell-1)N<n<n_k+1+\ell N$ with $1\le \ell\le \ell_{k+1}$, \begin{align}\label{ff13}
|J_{n}(x)|\ge c\cdot |J_{n_k+1+(\ell-1)N}(x)|,
\end{align} where $c=c(M, N)$ is an absolute constant.
\end{itemize}
\end{lemma}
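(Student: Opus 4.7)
\emph{Overall approach.} Every estimate below reduces to three inputs from Proposition \ref{pp3}: the cylinder-length formula $|I_n|\asymp 1/q_n^2$, the quasi-multiplicativity $q_{n+m}\asymp q_nq_m$, and the product bound $\prod a_i\le q_n\le 2^n\prod a_i$, combined with the consecutive-positioning statement of Proposition \ref{pp2}. My plan is to factor $q_n$ at every relevant level into the ``bulk'' part $q_{n_k-1}$ (or $q_{n_k+m-1}$) inherited from earlier stages, the contribution of the run of large partial quotients $a_{n_k+j}\in[A_j^{n_k},2A_j^{n_k})$, which by $P_3$ equals $\beta_{i-1}^{n_k}$ up to a factor $2^i$, and the contribution of the word blocks $w_i^{(k)}$. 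The algebraic glue is the identity $A_i\beta_{i-1}=\beta_i$, which converts $1/(A_i^{n_k}\beta_{i-1}^{2n_k})$ into the advertised $1/(\beta_i^{n_k}\beta_{i-1}^{n_k})$; for Case 3 I use the cousin identity $\beta_{m-1}^{2}=A_{m-1}\beta_{m-1}\beta_{m-2}$.

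\emph{Bounded-range cases (1, 3, 4, 5).} In all four, $J_n$ is a union of consecutive cylinders $I_{n+1}(a_1,\ldots,a_n,a_{n+1})$ with $1\le a_{n+1}\le M$, so $J_n\subset I_n$ gives $|J_n|\le 1/q_n^2$, while $I_{n+1}(a_1,\ldots,a_n,1)\subset J_n$ has length $1/[(q_n+q_{n-1})(2q_n+q_{n-1})]\ge 1/(6q_n^2)\ge 1/(8q_n^2)$. It remains to plug in the correct decomposition of $q_n$. In Case 1 none is needed; in Case 3, iterating $P_2$ and $P_3$ yields $q_{n_k+m-1}\le 2\cdot 2^m q_{n_k-1}\beta_{m-1}^{n_k}$, after which the $\beta_{m-1}^2$ identity produces the form $A_{m-1}^{n_k}\beta_{m-1}^{n_k}\beta_{m-2}^{n_k}$; in Case 4, $q_{n_k+m-1+\ell N}\le 2^\ell q_{n_k+m-1}\prod_{i=1}^\ell q_N(w_i^{(k+1)})$, and the bound $q_N\ge 2^{(N-1)/2}$ together with the size condition $(2^{(N-1)/2})^{\varepsilon/2}\ge 2^{100}$ absorbs the factor $2^{2\ell}$ into $(\prod q_N)^\varepsilon$; Case 5 is the crudest, using that between levels $n_k+1+(\ell-1)N$ and $n$ at most $N-1$ partial quotients bounded by $M$ enter, so $P_2$ and $P_3$ give $q_n\le 2(M+1)^N q_{n_k+1+(\ell-1)N}$ and hence the claim with $c(M,N)=1/(32(M+1)^{2N})$.

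\emph{Large-range Case 2, and the main obstacle.} Here $J_n$ is the union of $\asymp A_i^{n_k}$ consecutive cylinders $I_{n+1}$ for $a_{n+1}\in[A_i^{n_k},2A_i^{n_k})$; by consecutive positioning its length telescopes to the distance between the extreme endpoints, giving $|J_n|\asymp 1/(A_i^{n_k}q_n^2)$. Writing $q_n=q_{n_k-1+i}\asymp q_{n_k-1}\beta_{i-1}^{n_k}$ and applying $A_i\beta_{i-1}=\beta_i$ yields the first asserted bound with cumulative constant $6\cdot 4^i$ produced by the $i$ successive applications of $P_2$. The second inequality, which replaces $q_{n_k-1}^2$ by $(\prod_{i=1}^{\ell_k}q_N(w_i^{(k)}))^{2(1+\varepsilon)}$, is the main bookkeeping obstacle: I will iterate $P_2$ over the entire prior Cantor structure to get $q_{n_k-1}\le 2^{\ell_1+\cdots+\ell_k+k-1}\,\beta_{m-1}^{n_1+\cdots+n_{k-1}}\,\prod_{t\le k}\prod_{i\le\ell_t}q_N(w_i^{(t)})$, and then invoke the auxiliary size condition tied to \eqref{sequence}, namely $(2^{\ell_k(N-1)/2})^{\varepsilon/2}\ge \prod_{t<k}(M+1)^{\ell_tN}\beta_{m-1}^{\sum\ell_iN+t}$, which was engineered precisely to show that all extraneous boundary factors can be absorbed into $(\prod_{i=1}^{\ell_k}q_N(w_i^{(k)}))^\varepsilon$ uniformly in $k$; squaring then yields the stated inequality. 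No conceptual difficulty remains beyond checking that this absorption goes through with the advertised constants.
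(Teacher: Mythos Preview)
Your proposal is correct and follows essentially the same route as the paper's own argument. The paper's (sketched) proof packages all five cases into a single endpoint computation
\[
|J_n(x)|=\frac{V_a+1-v_a}{(v_a q_n+q_{n-1})((V_a+1)q_n+q_{n-1})}\asymp \frac{V_a-v_a}{V_a v_a\, q_n^2},
\]
with $v_a,V_a$ the minimal and maximal admissible values of $a_{n+1}$, and then substitutes $(v_a,V_a)=(1,M)$ or $(A_i^{n_k},2A_i^{n_k})$ as appropriate; your split into ``bounded-range'' versus ``large-range'' cases and your use of $I_{n+1}(\ldots,1)\subset J_n$ for the lower bound in the former are just a reorganisation of this same computation. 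Your treatment of the absorption step in Case~2 (invoking the size condition on $\ell_k$) and of the $2^{2\ell}$ factor in Case~4 (via $(2^{(N-1)/2})^{\varepsilon/2}\ge 2^{100}$) is in fact more explicit than the paper's, which simply remarks that ``using subtle scale by the choice of $\ell_k$ is enough.''
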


\subsection{Mass distribution}\
In this subsection, we define $m$ mass distributions along the basic intervals $J_n(x)$ containing $x$. These mass distributions then can be extended respectively into $m$ probability measures supported on $E$ by the Carath\'eodory extension theorem. 
Now let us distribute the measure by induction.
For $n\le n_1+1$, 
\begin{enumerate}
  \item when $n=\ell N$ for each $1\le \ell\le \ell_1$, for $0 \le j \le m-1$  define
$$
  \mu_j(J_{\ell N}(x))=\prod_{i=1}^{\ell}\frac{\beta_{j-1}^N}{q_N(w_i^{(1)})^{2\dd_j}\cdot (\beta_j \beta_{j-1})^{\dd_j N}}.
  $$
  We note that measures $\mu_j$ for $0 \le j \le m-1$ can be defined on all basic cylinders of order $\ell N$ since  $x$ is arbitrary.  
  \item when $(\ell-1)N<n<\ell N$ for some $1\le \ell\le \ell_1$ and for all $0 \le j\le m-1$, define
  $$
  \mu_j(J_n(x))=\sum_{J_{\ell N}\subset J_n(x)}\mu_j(J_{\ell N}(x)).
  $$ The consistency property as mentioned above fulfills the measure of other basic intervals of order less than $n_{1}-1$.
        \item when $n=n_{1}+i$ for each $0\le i\le m-1$ and $0 \le j\le m-1$, define $$
  \mu_j(J_{n_1+i}(x))=\prod_{k=0}^i\frac{1}{A_k^{n_1}}\cdot \mu_j(J_{n_1-1}(x)) = \frac{1}{\beta_i^{n_1}} \cdot \mu_j(J_{n_1-1}(x)) .
  $$
  To make the proof more consistent, let us note that 
$$
 \mu_j(J_{n_1-1}(x)) = \frac{1}{(\beta_{-1})^{n_1}}  \mu_j(J_{n_1-1}(x)).
$$
\end{enumerate}
Assume the measure of all basic intervals of order $n$ has been defined when $n_{k}+m-1<n\le n_{k+1}+m-1$. 
\begin{enumerate}
  \item  When $n=n_{k}+m-1+\ell N$ for each $1\le \ell\le \ell_{k+1}$, for $0 \le j \le m-1$ define 
\begin{align}\label{ff777}
  \mu_j(J_{n_{k}+m-1+\ell N}(x))=\prod_{i=1}^{\ell}\frac{\beta_{j-1}^N}{q_N(w_i^{(k+1)})^{2\dd_j}\cdot  (\beta_j \beta_{j-1})^{\dd_j N}} \mu_j(J_{n_{k}+m-1}(x)) .
  \end{align}
  \item When $n_k+m-1+(\ell-1)N<n<n_k+m-1+\ell N$ for some $1\le \ell\le \ell_1$ and for $0 \le j\le m-1$, define
  $$
  \mu_j(J_n(x))=\sum_{J_{n_k+m-1+\ell N}\subset J_n(x)}\mu_j(J_{n_k+m-1+\ell N}).
  $$
Furthermore, for each measure, compared with the measure of a basic cylinder of order $n_k+1+(\ell-1) N$ and its offsprings of order $n_k+1+\ell N$, there is only a multiplier between them. More precisely, for measure $\mu_j$ it is the term 
$$
\frac{\beta_{j-1}^N}{q_N^{2\dd_j}(w_{\ell}^{(k+1)}) (\beta_j \beta_{j-1})^{\dd_j N}}.
$$ 
Thus for each $0 \le j\le m-1$, there is an absolute constant $\hat{c}>0$ such that
$$\mu_j(J_n(x))\ge \hat{c} \cdot \mu_j\Big(J_{n_k+m-1+(\ell-1)N}(x)\Big),$$
since the above two terms are uniformly bounded.  
  \item when $n=n_{k+1}+i$ for each $0\le i\le m-1$ and $0 \le j\le m-1$, define 
\begin{align}\label{ff11}
  \mu_j(J_{n_{k+1}+i}(x))&=\prod_{j=0}^i\frac{1}{A_j^{n_{k+1}}}\cdot \mu_j(J_{n_{k+1}-1}(x)) = \frac{1}{\beta_i^{n_{k+1}}} \cdot \mu_j(J_{n_{k+1}-1}(x)).
  \end{align}
  \item As for other orders of measure, to ensure the consistency property, let their measure equal to the summation of the measure of their offsprings. For each integer $n$, the relation between measures of a basic cylinder and its predecessor acts like the case $n_{k+1}+m-1+(\ell-1)N<n<n_{k+1}+m-1+\ell N$, for each $0 \le j\le m-1$, there is a constant $\hat{c}>0$ such that 
\begin{align}\label{ff2}
\mu_j(J_{n+1}(x))\ge \hat{c} \cdot \mu_j(J_n(x)).
\end{align}
\end{enumerate}

\subsection{H\"{o}lder exponent of $\mu$ for basic cylinders}\
We need to compare the measure and length of $J_{n}(x)$. Recall the definition \eqref{defdd} of $\dd_i$. For each $N,M$ we can arrange $\dd_i$'s in the ascending order. Note that if $\dd_j \le \dd_k$, then we have
$$
\sum\limits_{1\leq a_1,\ldots,a_{N}\leq M} \frac{1}{ q_{N}^{2\dd_j}}\ge \sum\limits_{1\leq a_1,\ldots,a_{N}\leq M} \frac{1}{ q_{N}^{2\dd_k}}
$$
and by definition \eqref{defdd} we get
$$
\frac{\beta_{j-1}^{1-{\dd_j}}}{\beta_j^{\dd_j}} \le \frac{\beta_{k-1}^{1-{\dd_k}}}{\beta_k^{\dd_k}} .
$$
Once again using the fact that $\dd_j \le \dd_k$, we obtain
\begin{equation}\label{conditions}
\frac{\beta_{j-1}^{1-{\dd_j}}}{\beta_j^{\dd_j}}  \le \frac{\beta_{k-1}^{1-{\dd_j}}}{\beta_k^{\dd_j}}.
\end{equation}
Thus, if $\dd_j =  \min_{0 \le k \le m-1} \dd_k$, then \eqref{conditions} holds for any $0 \le k \le m-1$. For every $0 \le j \le m-1$, will use measure $\mu_j$ when $\min_{0 \le k \le m-1} \dd_k = \dd_j $.

\begin{enumerate}
\item When $n=n_k-1+i$ for $0 \le i \le m-1$. For every $0 \le j \le m-1$ and for every $0 \le i \le m-1$, we have
\begin{align*}
\mu_j (J_{n_{k}-1+i}(x))&\le \frac{1}{ \beta_{i-1}^{n_k}} \cdot  \frac{\beta_{j-1}^{n_k}}{(\beta_{j} \beta_{j-1})^{\dd_j n_k}} \prod_{i=1}^{\ell_k}\frac{1}{q_N(w_i^{(k)})^{2 \dd_j }} \\
 & \le \frac{1}{(\beta_{i} \beta_{i-1})^{n_k \dd_j}} \left(  \frac{1}{ q_{n_k-1}^2}\right)^\frac{\dd_j}{1+\varepsilon} 
 \le  \left(  \frac{1}{(\beta_{i} \beta_{i-1})^{n_k} q_{n_k-1}^2}\right)^\frac{\dd_j}{1+\varepsilon}  \\
& \le \hat{c}_1 \cdot |J_{n_{k}-1+i}(x) |^{\frac{\dd_j}{1+\varepsilon}}.
\end{align*}

\item When $n=n_k +m-1$. For every $0 \le j \le m-1$ we have
\begin{align*}
\mu_j (J_{n_{k}+m-1}(x)) &= \frac{1}{A_{m-1}^{n_k}} \mu_j (J_{n_{k}+m-2}(x)) \le  \frac{1}{A_{m-1}^{n_k}}\cdot  \hat{c}_1 \cdot \left(    \frac{1}{A_{m-1}^{n_k} q_{n_k+m-2}^2       } \right)^{\frac{\dd_j}{1+\epsilon}}\\
& \le  \hat{c}_1 \cdot \left(    \frac{1}{A_{m-1}^{2n_k} q_{n_k+m-2}^2       } \right)^{\frac{\dd_j}{1+\epsilon}} \le \hat{c}_2  |J_{n_{k}+m-1}(x) |^{\frac{\dd_j}{1+\varepsilon}} \le \hat{c}_2  \left(    \frac{1}{ q_{n_k+m-1}^2  } \right)^{\frac{\dd_j}{1+\epsilon}}.
\end{align*}

\item When $n=n_k+m-1 +\ell N$ for some $1\le \ell < \ell_{k+1}$. Then for each $0 \le j \le m-1$,
\begin{align*}
\mu_j(J_{n_k+m-1+\ell N}(x))
  \le  \hat{c}_2   \prod_{i=1}^{\ell}\frac{1}{q_N(w_i^{(k+1)})^{2s}}  \left( \frac{1}{ q_{n_k+m-1}^2} \right)^\frac{{\dd_j}}{1+\varepsilon}\le \hat{c}_2 | J_{n_k+m-1+\ell N}(x)|^\frac{{\dd_j}}{1+\varepsilon}. 
\end{align*}

\item For other $n$, let $1 \le \ell \le \ell_k$ be the integer such that
$$
n_k+m-1+(\ell-1)N \le n < n_k+m-1 +\ell N.
$$
Recall \eqref{ff13}. Then for each $0 \le j\le m-1$,
\begin{align*}
  \mu_j(J_n(x))&\le \mu_j(J_{n_k+m-1+(\ell-1)N}(x)) \le \hat{c}_2 \cdot \big|J_{n_k+m-1+(\ell-1)N}(x)\big|^\frac{{\dd_j}}{1+\varepsilon}\le \hat{c}_2 \cdot c \cdot \big|J_{n}(x)\big|^\frac{{\dd_j}}{1+\varepsilon}.
\end{align*}
\end{enumerate}
\,
In a summary, we have shown that for some absolute constant $c_3$, for any $n\ge 1$ and $x\in E$, \begin{align}\label{g1}
  \mu_j(J_n(x))\le \hat{c}_3\cdot |J_n(x)|^\frac{{\dd_j}}{1+\varepsilon} \leq \hat{c}_3\cdot |J_n(x)|^\frac{{\min_{0\le j \le m-1}{\dd_j}}}{1+\varepsilon} .
\end{align}

\subsection{H\"{o}lder exponent for a general ball}

For simplicity, write
$$
\tau= \frac{\min_{0\le j \le m-1}{\dd_j}}{1+\varepsilon}.
$$
The next lemma gives a minimum gap between two adjacent fundamental cylinders.
\begin{lemma}[Gap estimation]\label{l3} Denote by $G_n(a_1,\ldots, a_n)$ the gap between $J_n(a_1,\ldots, a_n)$ and other basic cylinders of order $n$. Then $$
G_n(a_1,\ldots, a_n)\ge \frac{1}{M}\cdot |J_n(a_1,\ldots,a_n)|.
$$
\end{lemma}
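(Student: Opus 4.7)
The plan is to compute both $|J_n(a_1,\ldots,a_n)|$ and the minimum distance from $J_n(a_1,\ldots,a_n)$ to any competing basic cylinder $J_n(a_1',\ldots,a_n')$ in closed form, using the endpoint description of cylinders in Proposition~\ref{pp3} and the linear ordering of sub-cylinders in Proposition~\ref{pp2}.

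First I would argue that the nearest competitor is $J_n(a_1,\ldots,a_{n-1},a_n\pm 1)$, i.e.\ the basic cylinder obtained by perturbing only the last digit $a_n$ (among the values admissible from the defining constraints of $D_n$). Any competitor that alters an earlier digit lies outside the parent cylinder of order $n-1$ containing $J_n(a_1,\ldots,a_n)$, and reaching it requires crossing a tail region of size comparable to a level-$(n-1)$ gap, which is much larger than the level-$n$ gap under discussion.

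Next I would split into the two cases coming from the definition of $D_n$: the \emph{free case} $a_{n+1}\in\{1,\ldots,M\}$, in which $J_n$ is a union of $M$ consecutive $I_{n+1}$-sub-cylinders; and the \emph{special case} $a_{n+1}\in\{A_i^{n_k},\ldots,2A_i^{n_k}-1\}$, in which $J_n$ is a union of $A_i^{n_k}$ consecutive sub-cylinders. In either case, using the identity $p_{n-1}q_n-p_nq_{n-1}=(-1)^n$ from Proposition~\ref{pp3}, both $|J_n|$ and $G_n$ reduce to explicit rational expressions in $q_n,q_{n-1}$ and the number of sub-cylinders. The relevant points are the boundary $\frac{p_n+p_{n-1}}{q_n+q_{n-1}}$ of $I_n$ (where the $a_{n+1}=1$ sub-cylinder sits), the boundary $\frac{p_n}{q_n}$ of $I_n$ (the accumulation point of the $a_{n+1}\to\infty$ sub-cylinders), and the internal endpoint $\frac{(M+1)p_n+p_{n-1}}{(M+1)q_n+q_{n-1}}$ marking the edge of $J_n$. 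In the free case one obtains
\[
|J_n(a_1,\ldots,a_n)|=\frac{M}{(q_n+q_{n-1})\bigl((M+1)q_n+q_{n-1}\bigr)},\qquad
G_n(a_1,\ldots,a_n)\ge\frac{1}{q_n\bigl((M+1)q_n+q_{n-1}\bigr)},
\]
with ratio $(q_n+q_{n-1})/(Mq_n)\ge 1/M$, as required; the special case is entirely analogous, with $A_i^{n_k}$ in place of $M$, and in fact yields a much larger ratio.

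The main bookkeeping obstacle is the parity-dependent ordering of sub-cylinders from Proposition~\ref{pp2}: one needs to verify in each parity of $n$ which endpoint of $I_n(a_1,\ldots,a_n)$ corresponds to $a_{n+1}=1$ and which to $a_{n+1}\to\infty$, so that the correct adjacent $I_n$-cylinder is identified and the correct tail region $\{a_{n+1}>M\}$ governs $G_n$. After that orientation check, the lemma follows from the algebraic simplification above.
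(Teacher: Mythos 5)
The paper offers no argument of its own here; it simply points to Lemma~5.3 of Huang--Wu--Xu. Your explicit endpoint computation is therefore a genuinely different, self-contained route, and its skeleton is right: identify the nearest admissible neighbour $J_n(a_1,\ldots,a_{n-1},a_n\pm1)$, compute $|J_n|$ from the endpoints of the union $\bigcup_{j=1}^{M}I_{n+1}$, and compare with a tail length. Your formula $|J_n|=M/\bigl((q_n+q_{n-1})((M+1)q_n+q_{n-1})\bigr)$ is correct, as is the observation that competitors altering an earlier digit sit across a full $\{a_{n+1}>M\}$ tail of some $I_n$ and are thus never the closest.

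There is, however, a real error in the displayed gap bound, and it is exactly the orientation point you flag but do not carry through. Because the union $\bigcup_{j=1}^{M}I_{n+1}(a_1,\ldots,a_n,j)$ is flush against the $a_{n+1}=1$ endpoint of $I_n(a_1,\ldots,a_n)$, which is the common boundary of $I_n(\ldots,a_n)$ and $I_n(\ldots,a_n+1)$, the two one-sided gaps are asymmetric. The gap to $J_n(\ldots,a_n-1)$ is the $\{a_{n+1}>M\}$ tail of $I_n(\ldots,a_n)$, of length $1/\bigl(q_n((M+1)q_n+q_{n-1})\bigr)$, which is what you wrote. But the gap to $J_n(\ldots,a_n+1)$ is the $\{a_{n+1}>M\}$ tail of $I_n(\ldots,a_n+1)$, which involves the larger denominator $q_n'=q_n+q_{n-1}$ and has length $1/\bigl(q_n'((M+1)q_n'+q_{n-1})\bigr)$. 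This is the \emph{smaller} of the two, so your asserted $G_n\ge 1/\bigl(q_n((M+1)q_n+q_{n-1})\bigr)$ does not hold for the minimum gap: whenever $a_n+1$ is admissible, the competitor on that side is closer than your bound allows. Running your own ratio computation with $q_n'$ in place of $q_n$ yields
\[
\frac{G_n}{|J_n|}\ \ge\ \frac{(M+1)q_n+q_{n-1}}{M\bigl((M+1)q_n'+q_{n-1}\bigr)}\ \ge\ \frac{1}{2M},
\]
but this is strictly less than $1/M$ for every $n\ge1$ with $q_{n-1}>0$. So your argument as written establishes a gap estimate with constant $1/(2M)$, not the $1/M$ in the statement. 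Downstream this is harmless, since the mass distribution argument only needs \emph{some} positive constant depending on $M$ and $N$, but a complete proof must compute the tail in the neighbouring cylinder and accept the weaker constant (or rephrase the bound against the smaller of the two adjacent $J_n$'s, for which your $1/M$ calculation does apply).
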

\begin{proof}
  The proof of this lemma is derived from the positions of the cylinders in Proposition \ref{pp2}. We omit the details and refer the reader to its analogous proof in \cite[Lemma 5.3]{HuWuXu}.
\end{proof}

Then for any $x\in E$ and $r$ small enough, there is a unique integer $n$ such that 
$$G_{n+1}(x)\le r<G_{n}(x).$$ 
This implies that the ball $B(x,r)$ can only intersect one basic cylinder $J_n(x)$, and so all the basic cylinders of order $n+1$ for which $B(x,r)$ can intersect are all contained in $J_n(x)$. Note that $n_{k-1}+1\le n <n_{k}+1$. 
\begin{enumerate}
\item For $n_{k-1}+1\le n<n_{k}-1$, by (\ref{ff2}) and (\ref{g1}), it follows that for each $0 \le j\le m-1$,
\begin{align*}
\mu_j(B(x,r))&\le \mu_j(J_n(x))\le c\cdot \mu_j(J_{n+1}(x))\le c\cdot \hat{c}_3\cdot \big|J_{n+1}(x)\big|^{\tau}\\&\le \hat{c} \cdot \hat{c}_3\cdot M\cdot (G_{n+1}(x))^{\tau}\le \hat{c} \cdot \hat{c}_3\cdot M\cdot  r^{\tau}.
\end{align*}
\item For $n=n_{k}-1+i$ for $0 \le i \le m-1$, the ball $B(x,r)$ can only intersect one basic cylinder $J_{n_k-1+i}(x)$ of order $n_k-1+i$. Next, the number of basic cylinders of order $n_{k}+i$ which are contained in $J_{n_k-1+i}(x)$ and intersect the ball can be calculated as follows.
We write $x=[a_1(x),a_2(x),\ldots]$ and observe that any basic cylinder $J_{n_{k}+i}(a_{1},\ldots,a_{n_k+i})$ is contained in the cylinder $I_{n_k+i}(a_{1},\ldots,a_{n_k+i})$. Note that $A_{i}^{n_{k}}\le a_{n_{k}+i}\le 2A_{i}^{n_{k}}$, the length of cylinder $I_{n_k+i}$ is
$$
\frac{1}{q_{n_k+i}(q_{n_k+i}+q_{n_k-1+i})}\ge \frac{1}{2^{5}}\cdot \frac{1}{q_{n_k-1+i}^2A_i^{2n_k+i}}.
$$
We also note that radius $r$ is sometimes too small to cover a whole cylinder of order $n_{k}+i$. The exposition needs to split into two parts. When 
$$r<\frac{1}{2^{5}} \frac{1}{q_{n_k-1+i}^2A_i^{2n_k+i}},$$ 
then the ball $B(x,r)$ can intersect at most three cylinders $I_{n_k+i}(a_1,\ldots,a_{n_{k+i}})$ and so three basic cylinders $J_{n_k+i}(a_1,\ldots,a_{n_{k}+i})$. Since each measure has the same distribution on these intervals, for $0 \le j\le m-1$,
\begin{align*}
  \mu_j(B(x,r))&\le 3\mu_j(J_{n_k+i}(x))\le 3 \cdot \hat{c}_3\cdot |J_{n_k+i}(x)|^{\tau}\\ 
&\le 3\cdot \hat{c}_3\cdot M\cdot G_{n_k+i}(x)^{\tau}\le 3\cdot \hat{c}_1\cdot M\cdot r^{\tau}.
  \end{align*}
When $$
  r\ge \frac{1}{2^{5}} \frac{1}{q_{n_k-1+i}^2A_i^{2n_k+i}},
  $$ then the number of basic cylinders of order $n_k+i$ for which the ball $B(x,r)$ can intersect is at most $$
  {2^{6}r}\cdot q_{n_k-1+i}^{2}A_i^{2n_{k}+i}+2\le 2^7\cdot {r}\cdot q_{n_k-1+i}^{2}A_i^{2n_{k}+i}.$$ 
Thus, for $0\le j\le m-1$,
 \begin{align*}
    \mu_j(B(x,r))&\le \min\Big\{\mu_j(J_{n_k-1+i}(x)),\ \  2^7\cdot {r}\cdot q_{n_k-1+i}^{2}(u)A_i^{2n_{k}+i}\cdot \frac{1}{A_i^{n_k+i}}\cdot \mu_j(J_{n_k-1+i}(x))\Big\}\\
    &\le \hat{c}_3\cdot |J_{n_k-1+i}|^{\tau}\cdot \min\Big\{1, 2^7\cdot {r}\cdot q_{n_k-1+i}^{2}(u)A_i^{n_{k}+i}\Big\}\\
    &\le \hat{c}_3\cdot \left(\frac{1}{q_{n_k-1+i}^2 A_i^{n_k+i}}\right)^{\tau} \cdot \Big(2^7\cdot {r}\cdot q_{n_k-1+i}^{2}A_i^{n_{k}+i}\Big)^{\tau(1-\epsilon)}\\
    &=\hat{c}_4 \cdot r^{\tau}.
  \end{align*}

\end{enumerate}

\subsection{Conclusion}
Thus by applying the mass distribution principle (Proposition \ref{p1})
\begin{equation*}\label{dim min}
\hdim  S_m(A_0,\ldots,A_{m-1}) \ge \hdim E \ge \frac{\min_{0 \le i \le m-1} \dd_i}{1+\varepsilon}.
\end{equation*}

Since $\varepsilon>0$ is arbitrary, by letting $N\to\infty$ and then $M\to\infty$, we arrive at
$$
\hdim  S_m(A_0,\ldots,A_{m-1}) \geq \min_{0 \le i \le m-1} d_i.
$$
This finishes the proof.

\subsection{Remark on Theorem \ref{General}}\label{remark1}
 Note that in order to prove a lower bound for the Hausdorff dimension of $S_m(A_0,\ldots,A_{m-1})$, we have considered a subset of this set for which the location of the blocks of exponentially growing partial quotients is given by some large sparse integers sequence of a specific type. Namely, we required in \eqref{sequence} that a number of partial quotients between two blocks of exponentially growing partial quotient is a multiple of $N$. However, in some applications it is useful to have a result for sequences of less restricted type. To prove such a result, there is a little bit extra work to be done but the main idea of the construction is the same.

Consider an arbitrary sparse integer sequence $\{n_k\}_{k\geq1}$, express it in the form
$$
n_1 = \ell_1N+(N+r_1) \text{ and } n_{k+1} - n_k = m + \ell_{k+1} +(N+ r_{k+1}) \text{ for all } k\geq 1,
$$
where $0 \leq r_k < N$ for all $k\geq 1$. Denote $m_k = n_{k-1} + m + \ell_k N$ with $n_0 = -m$.
Consider a set 
\begin{align}
\hat{E}&=\hat{E}_M^N(A_0,\ldots,A_{m-1}) =  \{ x\in[0,1): \,   A_i^{n_k} \le a_{n_k+i}(x) < 2 A_i^{n_k} \text{ for all } k\geq1, \text{ for all } 0\le i \le m-1, \label{mainsubset1}\\
& a_{m_k+1}=\cdots=a_{n_k-1}=2  \text{ and } a_n(x)\in\{1,\ldots,M\} \text{ for other } n\in\N \}.\notag
\end{align}
This means that we set $N+r_{k}$ partial quotients prior to the beginning of each block of exponentially growing partial quotients to be equal to $2$. Now for this set the proof is exactly the same as in Theorem \ref{General}, except we will have to deal with those new partial quotients. This can be done by defining our measures for each $0 \leq j \leq m-1$ and for all $m_k < n < n_k$ as
$$
\mu_j ( J_n ) = \mu_j ( J_{m_k}).
$$
In the end, we will get
$$
\hdim \hat{E} \geq  \frac{\min_{0 \le i \le m-1} \dd_i}{1+\varepsilon}.
$$

\section{Applications}
Our main result is very helpful for obtaining lower bounds in different setups, which is usually the hardest part in determining the Hausdorff dimension of the underlying set. Here we give some examples both of known results, for which our Theorem \ref{General} could have been used to derive the same result, as well as a new problem, where our set also helps to derive an optimal lower bound.
\subsection{Known results}

\subsubsection{Wang-Wu Theorem, \cite[2008]{WaWu08}}
The most obvious example is a well-known theorem by Wang-Wu from \cite{WaWu08}. The authors were concerned with the set
$$
F(B) = \{ x\in [0,1) : a_n(x) \ge B^n  \ \text{\,for infinitely many } n\in\N \}. 
$$
Their main result about this set is
\begin{theorem}[{\protect\cite[Wang-Wu]{WaWu08}}]\label{WaWu}
For any $1\leq B<\infty$,
$$
 \hdim F(B) =s(B):=\inf \{s\geq 0 :\mathsf{P}(T, -s(\log B+\log |T^{\prime}|))\le 0\}.
$$
\end{theorem}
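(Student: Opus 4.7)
The proof will split naturally into the lower bound, which is an immediate consequence of Theorem \ref{General}, and the upper bound, which follows from a standard covering argument against the pressure function.

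For the lower bound, I would observe that by specializing Theorem \ref{General} to $m=1$, $A_0 = B$, $c_0 = 1$, one obtains the set
$$S_1(B) = \{ x \in [0,1) : B^n \leq a_n(x) < 2B^n \text{ for infinitely many } n \in \mathbb{N}\},$$
which is clearly contained in $F(B)$. Since $\beta_{-1} = 1$ and $\beta_0 = B$, the definition \eqref{defd} collapses to
$$d_0 = \inf\{s \geq 0 : P(T, -s\log|T'(x)| - s\log B) \leq 0\} = s(B).$$
Theorem \ref{General} then gives $\hdim F(B) \geq \hdim S_1(B) = d_0 = s(B)$, with no further work required.

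For the upper bound, I would produce a natural cover of $F(B)$ analogous to the one used in Section 3.1. Writing $F(B) = \bigcap_{N\geq 1}\bigcup_{n\geq N}\bigcup_{a_1,\ldots,a_{n-1}\in\mathbb{N}} J_{n-1}(a_1,\ldots,a_{n-1})$, where
$$J_{n-1}(a_1,\ldots,a_{n-1}) = \bigcup_{a_n \geq B^n} I_n(a_1,\ldots,a_n),$$
the combination of Propositions \ref{pp3} and \ref{pp2} yields the length estimate $|J_{n-1}(a_1,\ldots,a_{n-1})| \asymp \frac{1}{B^n q_{n-1}^2}$. For any $s > s(B)$, the definition of $s(B)$ combined with Corollary \ref{cor2.1} gives $P(T, -s\log B - s\log|T'|) < 0$, so the pressure formula implies
$$\sum_{a_1,\ldots,a_{n-1}\in\mathbb{N}} \left(\frac{1}{B^n q_{n-1}^2}\right)^s \leq e^{-cn}$$
for some $c>0$ and all sufficiently large $n$. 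Summing over $n \geq N$ and applying the Hausdorff-measure Borel--Cantelli lemma (Proposition \ref{bclem}) gives $\mathcal{H}^s(F(B)) = 0$, so $\hdim F(B) \leq s$; letting $s \downarrow s(B)$ concludes.

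The hard part here is essentially nonexistent: the entire difficulty of the Wang--Wu theorem has been absorbed into the proof of Theorem \ref{General}, whose lower-bound construction via carefully chosen probability measures on a Cantor-type subset is precisely what makes the lower bound $\hdim F(B) \geq s(B)$ accessible. The upper bound is the routine pressure-function-plus-covering argument that is standard in this area, and requires only the length estimate for $J_{n-1}$ and the continuity of the pressure function in $s$.
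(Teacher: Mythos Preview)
Your proposal is correct and, for the lower bound, matches the paper's treatment exactly: the paper also specializes Theorem \ref{General} to $m=1$, $A_0=B$, $c_0=1$, observes that $S_1(B)\subset F(B)$, and reads off $\hdim S_1(B)=d_0=s(B)$. The paper does not supply an upper-bound argument at all here, since Theorem \ref{WaWu} is quoted as a known result from \cite{WaWu08} and the point of the section is only to show that Theorem \ref{General} recovers the optimal lower bound; your covering argument via $J_{n-1}$ and the pressure function is the standard one and is correct, though the reference to Corollary \ref{cor2.1} is not quite what is needed --- you really use the monotonicity and continuity of $s\mapsto \mathsf P(T,\psi_1)$ to get strict negativity of the pressure for $s>s(B)$.
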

To get the optimal lower bound for this setup using Theorem \ref{General}, one can simply let $m=1$, $A_0= B$ and $c_0=1$, that is to consider the set
$$
S_1( B) = \{ x\in[0,1) : B^n \leq a_n(x) < 2B^n   \ \text{\,for infinitely many } n\}.
$$
By Theorem \ref{General}, 
\begin{align*}
\hdim S_1 (B) &= d_0 = \inf\{ s\ge0 : P( T, -s \log | T'| - s \log \beta_0 + (1-s) \log \beta_{-1})\le 0\}  \\
& = \inf\{ s\ge0 : P( T, -s \log | T'| - s \log B )\le 0\},
\end{align*}
which coincides with the result from Theorem \ref{WaWu}.\\

\subsubsection{Bakhtawar-Bos-Hussain Theorem, \cite[2020]{BBH2}} Recall that Kleinbock-Wadleigh showed that  the set $\EE_2(\Phi)$ has connections with the set of Dirichlet non-improvable numbers. In \cite{BBH2} authors have considered the set
\begin{equation*}\label{pqc}
 \mathcal{F}(B ):=\EE_2(B)\setminus\EE_1(B)=\left\{ x\in [
0,1):
\begin{array}{r}
a_{n+1}(x)a_{n}(x)\geq B^n \text{ for infinitely many }n\in 
\mathbb{N}
\text{ and} \\ 
a_{n+1}(x)<B^n \text{ for all sufficiently large }n\in 
\mathbb{N}
\end{array}
\right\}.  
\end{equation*} 
They proved that the difference set $ \mathcal{F}(B )$ has positive Hausdorff dimension. More precisely they proved
\begin{theorem}
\begin{equation}\label{thBBH}
\hdim  \mathcal{F}(B ) =t_B: =  \inf\{ s\ge0 : P( T, -s \log | T'| - s^2 \log B )\le 0\}.
\end{equation}
\end{theorem}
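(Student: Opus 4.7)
My plan is to prove the two bounds separately. The lower bound $\hdim \mathcal{F}(B) \ge t_B$ will follow from Theorem \ref{General} applied to a carefully chosen subset of $\mathcal{F}(B)$, while the matching upper bound was established in \cite{BBH2} via a delicate covering argument and is the main technical obstacle.

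For the lower bound I take $m=2$, $A_0 = B^{t_B}$, and $A_1 = B^{1-t_B}$, so that $\beta_0 = A_0 = B^{t_B}$ and $\beta_1 = A_0 A_1 = B$. At every index $n$ satisfying the defining conditions of $S_2(A_0, A_1)$, we have $a_n(x)\, a_{n+1}(x) \ge A_0^n A_1^n = B^n$, so $x \in \EE_2(B)$. To further force $x \notin \EE_1(B)$, I pass to the subset $\hat{E}$ from Remark \ref{remark1}, whose partial quotients outside the two-element blocks $\{n_k, n_k + 1\}$ are restricted to $\{1, \ldots, M\}$. For $n$ sufficiently large one then verifies $a_{n+1}(x) < B^n$ in three cases: at non-special $n$, $a_{n+1} \le M < B^n$; at $n = n_k$, $a_{n_k+1} < 2 B^{(1-t_B) n_k} < B^{n_k}$ since $t_B > 0$; at $n = n_k - 1$, $a_{n_k} < 2 B^{t_B n_k} < B^{n_k - 1}$ since $t_B < 1$ (recall from the pressure values at $s=\tfrac12,1$ that $t_B \in (1/2, 1)$). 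Thus $\hat{E} \subseteq \mathcal{F}(B)$.

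It remains to verify $\min(d_0, d_1) = t_B$. From \eqref{defd}, $d_0$ is the unique root of $P(T, -s\log|T'|) = s\, t_B \log B$, and $d_1$ is the unique root of $P(T, -s\log|T'|) = (s(1 + t_B) - t_B) \log B$. Substituting $s = t_B$ into both equations, the right-hand sides collapse to $t_B^2 \log B$, which is precisely the defining identity $P(T, -t_B \log|T'| - t_B^2 \log B) = 0$ of $t_B$. Hence $d_0 = d_1 = t_B$, and Theorem \ref{General} combined with Remark \ref{remark1} gives $\hdim \mathcal{F}(B) \ge \hdim \hat{E} \ge t_B$.

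For the upper bound I would write $\mathcal{F}(B) \subseteq \limsup_n G_n$ with $G_n = \{x : a_n(x) a_{n+1}(x) \ge B^n,\, a_{n+1}(x) < B^n\}$ and apply Proposition \ref{bclem} to an appropriate cover. The subtle point is that a naive cover by cylinders $I_{n+1}(a_1, \ldots, a_{n+1})$ constrained only by $a_n a_{n+1} \ge B^n$ and $a_{n+1} < B^n$ gives a convergence condition of the form $P(T, -s\log|T'|) + (1 - 2s) \log B \le 0$, which is strictly weaker than the target $P(T, -s\log|T'|) - s^2 \log B \le 0$. To extract the sharper pressure one must refine the cover along the dyadic scales $a_n \sim B^{n\alpha}$, $a_{n+1} \sim B^{n(1-\alpha)}$ with $\alpha + (1-\alpha) = 1$, and optimize the cylinder order at each scale in $\alpha$ and $s$, crucially using the ``Cantor-like'' tail constraint $a_m < B^m$ for all $m \ge N$. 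This balancing, which is the main technical obstacle, is carried out in \cite{BBH2}.
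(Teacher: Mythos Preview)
Your proposal is correct and follows essentially the same approach as the paper: both take $m=2$, $A_0=B^{t_B}$, $A_1=B^{1-t_B}$, pass to the subset $\hat{E}$ of Section~\ref{remark1}, verify $d_0=d_1=t_B$ by the substitution $s=t_B$, and defer the upper bound to \cite{BBH2}. Your write-up is in fact more explicit than the paper's, which simply asserts that $\hat{E}$ is ``clearly a subset of $\mathcal{F}(B)$'' and that ``by the choice of the parameters $d_0=d_1$''; your case analysis for $a_{n+1}(x)<B^n$ and your pressure computation fill in exactly those details.
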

To get a lower bound in this setup using our result, we take an arbitrary sparse sequence $n_k$ and set $m=2, A_0 = B^{t_B}, A_1 = B^{1-t_B}$ for the set $\hat{E}$ from Section \ref{remark1},  that is, we consider the set
\begin{equation*}
\hat{E} (  B^{t_B},  B^{1- t_B}) =\left\{x\in[0,1):
\begin{split} 
 &\begin{split}& B^{n_k t_B} \leq a_{n_k}(x) < 2 B^{n_k t_B} \\
 & B^{n_k(1- t_B)} \leq a_{n_k+1}(x) < 2 B^{n_k (1- t_B)}
 \end{split}&\text{for all } k\in\N, \quad a_m \leq M \text{ for other } m \\
\end{split}
\right\},
\end{equation*}
which is clearly a subset of $ \mathcal{F}(B )$. We can see that by the choice of the parameters $d_0=d_1$, and so by the proof of Theorem \ref{General} and the remark in Section \ref{remark1}, we have
$$
\hdim \mathcal{F}(B) \geq \hdim \hat{E} (  B^{t_B},  B^{1- t_B}) \geq d_0 = \inf\{ s\ge0 : P( T, -s \log | T'| - s^2 \log B )\le 0\},
$$
which coincides with the lower bound from \eqref{thBBH}.

 \subsubsection{Hussain-Li-Shulga Theorem, \cite[2022]{HLS}} Theorem \ref{General} is also a direct generalisation of a Theorem 1.7 from \cite{HLS} that gives the Hausdorff dimension of the set 
$$
E(A_1,A_2) \defeq \left\{ x\in[0,1): \, c_1 A_1^n\leq a_n(x) <2c_1A_1^n,  \, \,  c_2 A_2^n\leq a_{n+1}(x)<2  c_2 A_2^n,\,\text{for infinitely many }n\in\N \right\}.
$$
\begin{theorem}\label{dim E(A_1,A_2)}
For any $A_1>1$,
$$\hdim E(A_1,A_2) = \min \left\{ s(A_1) , g_{(A_1 A_2),A_1} \right \},$$
where 
\begin{equation}\label{pres1}
s(A_1)=\inf \{s\geq 0 :\mathsf{P}(T, -s(\log A_1 +\log |T^{\prime}|))\le 0\}
\end{equation}
and 
\begin{equation}\label{pres2}
g_{(A_1 A_2),A_1} = \inf \{ s\geq0:P(T,-s\log |T'| -s\log A_1 A_2 +(1-s)\log A_1) \leq 0\}.
\end{equation}
\end{theorem}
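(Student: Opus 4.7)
The statement is a direct corollary of Theorem \ref{General} in the case $m=2$, so the plan is simply to match notation and read off the two exponents. The target set $E(A_1,A_2)$ is literally the set $S_2(B_0,B_1)$ of Theorem \ref{General} under the relabelling $B_0 = A_1$ and $B_1 = A_2$ (the indices in the statement are offset by one from those in Theorem \ref{General}, but the cutoffs $c_1A_1^n$ and $c_2A_2^n$ align precisely with $c_0B_0^n$ and $c_1B_1^n$). By the remark following Theorem \ref{General}, the constants $c_1,c_2$ do not affect the Hausdorff dimension, so we may invoke Theorem \ref{General} directly.

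Next, I would compute the auxiliary quantities $\beta_{-1},\beta_0,\beta_1$ from the definition in \eqref{defd}. Under the relabelling above, $\beta_{-1}=1$, $\beta_0 = A_1$, and $\beta_1 = A_1 A_2$. Substituting into \eqref{defd} gives
\begin{align*}
d_0 &= \inf\{s\ge 0 : P(T,-s\log|T'| - s\log A_1 + (1-s)\log 1) \le 0\} \\
    &= \inf\{s\ge 0 : P(T,-s(\log A_1 + \log|T'|)) \le 0\} = s(A_1),
\end{align*}
which is exactly \eqref{pres1}, and
\begin{align*}
d_1 &= \inf\{s\ge 0 : P(T,-s\log|T'| - s\log(A_1A_2) + (1-s)\log A_1) \le 0\} = g_{A_1A_2,\,A_1},
\end{align*}
which is \eqref{pres2}. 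Theorem \ref{General} then yields
\[
\hdim E(A_1,A_2) \;=\; \min\{d_0,d_1\} \;=\; \min\bigl\{s(A_1),\; g_{A_1 A_2,\,A_1}\bigr\},
\]
as required.

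There is essentially no obstacle once Theorem \ref{General} is in hand: the only thing to verify carefully is the bookkeeping of the indices (the statement uses $A_1,A_2$ with the conditions on $a_n$ and $a_{n+1}$, while Theorem \ref{General} uses $A_0,\ldots,A_{m-1}$ with conditions on $a_{n+i}$), and the identification of the pressure potentials. In the original paper \cite{HLS} the lower bound required a bespoke Cantor-set construction with two probability measures; here it falls out as a special case of the unified construction in Section \ref{cantor}, which is precisely the point of Theorem \ref{General}.
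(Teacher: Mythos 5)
Your proposal is correct and follows exactly the same route the paper takes: the paper itself states only that ``One can easily see that this is a special case of our Theorem \ref{General} for $m=2$,'' and your relabelling $B_0=A_1$, $B_1=A_2$ together with the computation $\beta_{-1}=1$, $\beta_0=A_1$, $\beta_1=A_1A_2$ gives precisely $d_0=s(A_1)$ and $d_1=g_{A_1A_2,\,A_1}$. Your write-up is in fact more explicit about the index bookkeeping and the role of the constants $c_i$ (handled by the remark after Theorem \ref{General}) than the paper's one-line justification.
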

One can easily see that this is a special case of our Theorem \ref{General} for $m=2$. We also should note that in \cite{HLS} this theorem was used to get a lower bound for the Hausdorff dimension of the set
\begin{equation*}
\FF_{B_1,B_2}=\left\{x\in[0,1):
\begin{split} 
 a_n(x)a_{n+1}(x) & \geq B_1^n \text{\,\, for infinitely many } n\in\N \\
 a_{n+1}(x) & < B_2^n  \text{\,\, for all sufficiently large } n\in\N
\end{split}
\right\},
\end{equation*}  
and as a corollary also for the set
\begin{equation*}
\FF(\Phi_1,\Phi_2) = \left\{x\in[0,1):
\begin{split} 
 a_n(x)a_{n+1}(x) & \geq\Phi_1(n) \text{\,\, for infinitely many } n\in\N \\
 a_{n+1}(x) & <\Phi_2(n)  \text{\,\, for all sufficiently large } n\in\N
\end{split}
\right\},
\end{equation*}
where $\Phi_i:\N\to(0,\infty)$ are any functions such that $\lim\limits_{n\to\infty} \Phi_i(n)=\infty$. 

 \subsubsection{Huang-Wu-Xu Theorem, \cite[2020]{HuWuXu}} 
Another example is the main result of Huang-Wu-Xu paper \cite{HuWuXu}. They have considered a set
$$
E_m (B) := \{ x\in[0,1) : a_n(x) a_{n+1}(x) \cdots a_{n+m-1} (x) \ge B^n \  \text{for infinitely many } n\in\N \}.
$$
At the heart of their paper is the following result.
\begin{theorem}\label{HWXtheorem}
For $1\leq B<\infty$, and any integer $m\ge1$,
\begin{equation}\label{hwxconst}
\hdim E_m(B) = t_B^{(m)} = \inf \{ s: \mathsf{P}(T, -f_m(s) \log B -s \log | T^{\prime} | ) \leq 0 \},
\end{equation}
where $f_m(s)$ is given by the following iterative formula:
$$
f_1(s)=s, \,\,\,\,\, f_{k+1}(s) = \frac{s f_k (s)}{1-s+f_k(s)}, \,\, k \geq 1.
$$
\end{theorem}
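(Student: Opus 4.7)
The plan is to derive the lower bound $\hdim E_m(B) \geq t_B^{(m)}$ from Theorem \ref{General} by a judicious choice of $A_0, \ldots, A_{m-1}$, and to obtain the upper bound via a standard covering argument as in \cite{HuWuXu}.

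For the lower bound, I will first observe that whenever $A_0, \ldots, A_{m-1} > 1$ satisfy $A_0 A_1 \cdots A_{m-1} \geq B$, the inclusion $S_m(A_0, \ldots, A_{m-1}) \subset E_m(B)$ holds, because $a_{n+i}(x) \geq A_i^n$ for each $0 \leq i \leq m-1$ forces $\prod_{i=0}^{m-1} a_{n+i}(x) \geq B^n$. Theorem \ref{General} then yields $\hdim E_m(B) \geq \min_{0 \leq i \leq m-1} d_i$, and the sharpest such lower bound under the constraint $\prod A_i = B$ is obtained by picking the $A_i$ so that all $d_i$ coincide with some common value $t := t_B^{(m)}$. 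Writing $\beta_i = A_0 \cdots A_i$, the potentials $-t \log|T'| - t \log \beta_i + (1-t)\log \beta_{i-1}$ differ only by additive constants, so requiring them all to have pressure zero at $s=t$ reduces to the linear system
$$-t \log \beta_i + (1-t)\log \beta_{i-1} = -t \log A_0, \qquad 1 \leq i \leq m-1.$$
Setting $r := (1-t)/t$, this recurrence gives $\log A_i = r^i \log A_0$, and the normalisation $\sum_i \log A_i = \log B$ forces
$$\log A_0 = \frac{\log B}{1 + r + \cdots + r^{m-1}} > 0,$$
so every $A_i > 1$. With this choice, the common defining equation for $t$ is $P(T, -t \log|T'| - t \log A_0) = 0$.

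It remains to verify that the $t$ obtained this way coincides with $t_B^{(m)}$ from \eqref{hwxconst}, equivalently $t \log A_0 = f_m(t) \log B$, or
$$\frac{f_m(t)}{t} = \frac{1}{1 + r + \cdots + r^{m-1}}.$$
Setting $\phi_k := f_k(t)/t$, the defining recurrence $f_{k+1}(s) = s f_k(s)/(1-s+f_k(s))$ translates into $1/\phi_{k+1} = 1 + r/\phi_k$; starting from $\phi_1 = 1$, a direct induction yields $1/\phi_m = 1 + r + \cdots + r^{m-1}$, as required. As sanity checks: $m=1$ recovers Wang--Wu with $A_0 = B$, and $m=2$ gives $A_0 = B^t$, $A_1 = B^{1-t}$, reproducing the Bakhtawar--Bos--Hussain constant with $f_2(t) = t^2$.

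The upper bound $\hdim E_m(B) \leq t_B^{(m)}$ follows by covering $E_m(B)$ with cylinders of order $n+m-1$ inside which $a_n \cdots a_{n+m-1} \geq B^n$, summing $s$-volumes over all admissible profiles $(a_n, \ldots, a_{n+m-1})$ subject to this product constraint, and applying Proposition \ref{bclem}; this step is carried out in detail in \cite{HuWuXu}. The main obstacle in our argument is recognising that the optimal choice of $A_i$'s is forced by equalising the $d_i$'s and solving a geometric recurrence, and then matching the resulting closed-form expression for $\log A_0$ with the nonlinear iteration defining $f_m$; once these two observations are made, the derivation is routine.
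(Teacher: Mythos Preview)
Your proposal is correct and follows essentially the same approach as the paper: both choose $A_i$ so that all the $d_i$ coincide, yielding exactly the formula $A_i = B^{\frac{t^{m-1-i}(2t-1)(1-t)^i}{t^{m}-(1-t)^{m}}}$ that the paper states directly (your expression $\log A_i = r^i \log A_0$ with $r=(1-t)/t$ and $\log A_0 = \log B/(1+r+\cdots+r^{m-1})$ is the same thing written differently), and both defer the upper bound to \cite{HuWuXu}. Your derivation is in fact more explicit than the paper's, which simply asserts the choice of $A_i$ and leaves the verification that $d_0 = t_B^{(m)}$ to the reader, whereas you supply the recurrence $1/\phi_{k+1} = 1 + r/\phi_k$ linking the geometric sum to the iteration defining $f_m$.
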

Denote $t= t_B^{(m)}$. To get a lower bound in this setup, one should take the set $S_m(A_0,\ldots,A_{m-1})$ from Theorem \ref{General} and let 
$$
A_i = B^{\frac{t^{m-1-i}(2t-1)(1-t)^i}{t^{m}-(1-t)^{m}}}, \,\,  0 \le i \le m-1.
$$
One can easily check that with this choice of parameters $d_0=\ldots =d_{m-1}$, and so for this particular set of parameters we have
$$
\hdim S_m(A_0,\ldots,A_{m-1}) = d_0 = \inf \{ s: \mathsf{P}(T, -f_m(s) \log B -s \log | T^{\prime} | ) \leq 0 \},
$$
which coincides with the result from Theorem \ref{HWXtheorem}.
 \subsubsection{Bakhtawar-Hussain-Kleinbock-Wang Theorem, \cite[2022]{BHKW}} 
 Same construction was also used in \cite{BHKW} for the set with the weighted product of two partial quotients. For any $t_0, t_1\in\R_{>0}$, consider the set
\begin{equation*}\label{weight}
\D_2^{\mathbf t}(B):=\left\{x\in[0, 1):  a_{n}^{t_0} a_{n+1}^{t_1} \ge B^n \ {\text{for infinitely many}} \ n\in \N \right\}.
\end{equation*}
The Hausdorff dimension of this set is given by
\begin{theorem}\label{bhkw}
\begin{equation*}
\hdim \D_2^{\mathbf t}(B )= \SS = \inf \{s\ge 0: P(-s\log |T'|-f_{t_0,t_1 }(s)\log B)\le 0\},
\end{equation*}
where $$f_{t_0}(s)=\frac{s}{t_0}, \ \ f_{t_0, t_1}(s)=\frac{sf_{t_0}(s)}{t_1[f_{t_0}(s)+\max\{0, \frac{s}{t_1}-\frac{2s-1}{t_0}\}]}.$$
\end{theorem}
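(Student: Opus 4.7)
The plan is to derive the lower bound as an application of Theorem \ref{General}, together with the companion Wang--Wu bound (Theorem \ref{WaWu}) for the complementary regime, and to import the upper bound from \cite{BHKW}. For any $A_0, A_1 \ge 1$ with $A_0^{t_0} A_1^{t_1} = B$, I would first verify the containment $S_2(A_0, A_1) \subset \D_2^{\mathbf t}(B)$, since $a_n^{t_0} a_{n+1}^{t_1} \gtrsim (A_0^{t_0} A_1^{t_1})^n = B^n$ infinitely often on $S_2$. In parallel, an index shift shows $F(B^{1/t_1}) \subset \D_2^{\mathbf t}(B)$ (using $a_n \ge 1$). These two families of subsets will produce the Case 2 and Case 1 regimes of the piecewise formula $f_{t_0,t_1}$, respectively.

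Next, I would apply Theorem \ref{General} with $m = 2$, so that $\beta_{-1} = 1$, $\beta_0 = A_0$, $\beta_1 = A_0 A_1$, giving
\begin{align*}
d_0 &= \inf\{s \ge 0 : P(T,\, -s\log|T'| - s\log A_0) \le 0\},\\
d_1 &= \inf\{s \ge 0 : P(T,\, -s\log|T'| - s\log A_1 + (1-2s)\log A_0) \le 0\}.
\end{align*}
Maximizing $\min(d_0, d_1)$ over the constraint curve, the optimum occurs where $d_0 = d_1 = s$. Equating the two pressure equations at their common root gives the balance $s\log A_1 = (1-s)\log A_0$; combining with $t_0 \log A_0 + t_1 \log A_1 = \log B$ yields $\log A_0 = \frac{s\log B}{t_0 s + t_1(1-s)}$. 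Substituting back, the common value $s$ satisfies
$$P\bigl(-s\log|T'| - \tfrac{s^2}{t_0 s + t_1(1-s)}\log B\bigr) = 0,$$
which matches Case 2 of the piecewise formula, $f_{t_0,t_1}(s) = \frac{s^2}{t_0 s + t_1(1-s)}$. Separately, Theorem \ref{WaWu} applied to $F(B^{1/t_1})$ yields the Case 1 expression $f_{t_0,t_1}(s) = s/t_1$. A short calculation shows the two expressions agree at $s = t_1/(2t_1 - t_0)$, which is the zero of $s/t_1 - (2s-1)/t_0$, so the piecewise $f_{t_0,t_1}$ is continuous and the pointwise maximum of the two lower bounds coincides with $\SS$.

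For the upper bound, the standard approach (as in \cite{BHKW}) is to cover the limsup set $\D_2^{\mathbf t}(B)$ at each level by cylinders of controlled length via Proposition \ref{pp3}, partitioning the admissible $(a_n, a_{n+1})$ pairs according to which of $a_n^{t_0}$ or $a_{n+1}^{t_1}$ dominates -- mirroring the two regimes of $f_{t_0,t_1}$ -- and then applying Proposition \ref{bclem}. I expect the main obstacle in the lower-bound argument to be verifying that the switch between the interior optimization (Case 2) and the boundary Wang--Wu subset (Case 1) matches exactly the sign change in $\max\{0, s/t_1 - (2s-1)/t_0\}$; this requires tracking how $\min(d_0, d_1)$ evolves along the constraint curve as $A_0 \to 1^+$, where the parameter leaves the regime $A_i > 1$ covered by Theorem \ref{General} and must be replaced by the Wang--Wu bound on the shifted Wang--Wu subset.
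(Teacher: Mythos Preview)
Your approach is essentially the same as the paper's: both split into the two regimes according to the sign of $\SS/t_1 - (2\SS-1)/t_0$, use the Wang--Wu subset $\{x: a_{n+1}^{t_1}\ge B^n \text{ i.m.}\}$ in the first regime, and in the second regime apply Theorem~\ref{General} with $m=2$ and the specific choice $\log A_0 = \frac{\SS\log B}{t_0\SS + t_1(1-\SS)}$ (which you derive via the $d_0=d_1$ balance, and which the paper writes equivalently as $A_0 = B^{\SS/(t_1(1-\SS+\SS t_0/t_1))}$). The only cosmetic difference is that you frame the second regime as an optimization over the constraint curve $A_0^{t_0}A_1^{t_1}=B$, whereas the paper simply posits the optimal $A_0,A_1$ and checks $d_0=d_1$; your $\gtrsim$ in the containment $S_2\subset\D_2^{\mathbf t}(B)$ should be made exact via the constants $c_i$ as in the paper's Remark after Theorem~\ref{General}.
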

 As in their paper, we separately consider the case, where the value of the maximum in denominator is $0$, so  when
$$
\frac{\SS}{t_1}-\frac{2\SS-1}{t_0}\le 0,
 $$
 one should simply consider the subset
 $$
\Big\{x\in [0,1): a_{n+1}^{t_1}(x)\ge B^n,\  \text{for infinitely many } n\in \N\Big\}
$$
of $\mathcal{D}_{2}^{\bold{t}}(B)$ to get the desired $$
\hdim \mathcal{D}_{2}^{\bold{t}}(B)\ge \SS.
$$
When $$
\frac{\SS}{t_1}-\frac{2\SS-1}{t_0}> 0,
$$ 
in a small neighborhood of $\SS$ we  have
 \begin{equation}
f_{t_0,t_1}(s)=\frac{sf_{t_0}(s)}{t_1\big[f_{t_0}(s)+\frac{s}{t_1}-\frac{2s-1}{t_0}\big]}.
\end{equation}
So to get the optimal lower bound, we can set
 $$
m=2,\,\, A_0 = B^{\frac{\SS}{t_1 \left(1-\SS+\frac{\SS t_0}{t_1}\right)}},\,\, A_1 = \left(\frac{B}{A_0^{t_0}}\right)^{1/t_1}.
$$
For this choice of parameters, we will have $d_0=d_1$ and so by Theorem \ref{General} we get
\begin{align*}
\hdim S_2 (  A_0,  A_1 ) &= d_0 = \inf\left\{ s\ge0 : P\left( T, -s \log | T'| - \frac{\SS^2}{t_1(1-\SS+\frac{\SS t_0}{t_1})} \log B \right)\le 0\right\} \\
& =  \inf \{s\ge 0: P(-s\log |T'|-f_{t_0,t_1 }(s)\log B)\le 0\}\\ &= \SS.
\end{align*}
This coincides with the lower bound from Theorem \ref{bhkw}.
 \subsubsection{Tan-Tian-Wang Theorem, \cite[2022]{TanTianWang}} 
  An optimal lower bound from a recent result by Tan, Tian and Wang \cite{TanTianWang} can also be extracted from our general theorem. Let us formulate their result. Consider a set
$$
E(\psi) = \{ x\in [0,1) : \exists 1 \le k\neq l \le n , a_k(x) \ge \psi(n), a_l(x) \ge \psi(n), \ \text{for infinitely many } n\in\N \}.
$$
One of the results of their paper is
\begin{theorem}
Let $\psi : \N \to \R^+$ be a non-decreasing function, and
$$
\log B = \liminf_{n\to\infty} \frac{ \log \psi(n)}{n}, \,\,\, \log b = \liminf_{n\to\infty} \frac{ \log\log \psi(n)}{n}.
$$
We see that
\begin{itemize}
\item when $1\leq B <\infty$,
$$
\hdim E(\psi) = \inf \{ s\ge 0 : P(T, -(3s-1)\log B - s \log | T'(x) | ) \leq 0 \}
$$
(remarking that $\hdim E(\psi) = 1$ if $B=1$);
\item when $B=\infty$,
$$
\hdim E(\psi) = \frac{1}{1+b}.
$$
\end{itemize}
\end{theorem}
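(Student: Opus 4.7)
The plan is to derive the lower bound for $\hdim E(\psi)$ in the case $1 < B < \infty$ as a direct application of Theorem \ref{General} via the construction $\hat E$ of Section \ref{remark1}. Fix a small $\epsilon > 0$. By the $\liminf$ definition of $B$, there is an infinite (and, by passing to a subsequence if necessary, arbitrarily sparse) sequence $\{n_k\}$ along which $\psi(n_k + 1) \le B^{(n_k + 1)(1 + \epsilon)}$. I would set $m = 2$, $A_0 = A_1 = B^{1 + 2\epsilon}$, and form $\hat E$ as in Section \ref{remark1} relative to this sequence. Any $x \in \hat E$ satisfies, for every sufficiently large $k$, both $a_{n_k}(x) \ge B^{(1 + 2\epsilon) n_k}$ and $a_{n_k + 1}(x) \ge B^{(1 + 2\epsilon) n_k}$; once $n_k \epsilon \ge 1 + \epsilon$, each of these exceeds $\psi(n_k + 1)$, so taking $N = n_k + 1$ the indices $n_k \ne n_k + 1$, both at most $N$, witness $x \in E(\psi)$. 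Hence $\hat E \subset E(\psi)$.

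The next step is to compute the dimensional quantities $d_0$ and $d_1$ supplied by Theorem \ref{General}. With $\beta_{-1} = 1$, $\beta_0 = B^{1 + 2\epsilon}$, $\beta_1 = B^{2(1 + 2\epsilon)}$, the potential defining $d_0$ is $-s \log |T'| - s(1 + 2\epsilon) \log B$, giving $d_0 = s(B^{1 + 2\epsilon})$ in the Wang-Wu sense, while the one defining $d_1$ simplifies to $-s \log |T'| - (3s - 1)(1 + 2\epsilon) \log B$. Using the identity $P(T, \psi - c) = P(T, \psi) - c$ for constant $c$, the two pressures differ by $-(2s - 1)(1 + 2\epsilon) \log B$, which is nonpositive for $s \ge 1/2$; since $d_0 \ge 1/2$ by Corollary \ref{cor2.1}, this forces $d_1 \le d_0$, and so Theorem \ref{General} yields
$$\hdim \hat E \ge d_1 = \inf\Big\{s \ge 0 : P\bigl(T, -s \log |T'| - (3s - 1)(1 + 2\epsilon) \log B\bigr) \le 0\Big\}.$$
Letting $\epsilon \to 0$ and invoking continuity of the pressure in its parameter (Corollary \ref{cor2.1}) delivers the bound $\hdim E(\psi) \ge \inf\{s \ge 0 : P(T, -s \log |T'| - (3s - 1) \log B) \le 0\}$, which, because the formula also evaluates to $1$ at $B = 1$, covers the whole range $1 \le B < \infty$.

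For the matching upper bound in the same regime, I would use the decomposition $E(\psi) \subset \bigcap_N \bigcup_{n \ge N} \bigcup_{1 \le k < l \le n} H_{k,l}(n)$ with $H_{k,l}(n) = \{x : a_k(x), a_l(x) \ge \psi(n)\}$, and cover $H_{k,l}(n)$ by the fundamental cylinders $J(a_1, \ldots, a_{l-1}) = \bigcup_{a_l \ge \psi(n)} I_l(a_1, \ldots, a_l)$ with $a_k \ge \psi(n)$ among the fixed digits. Since $|J| \asymp 1/(\psi(n) q_{l-1}^2)$, summation over $a_k \ge \psi(n)$ contributes an extra factor $\psi(n)^{1 - 2s}$ via the split $q_{l-1}^{-2s} \lesssim a_k^{-2s} q_{k-1}^{-2s} q_{l-1-k}^{-2s}$, and the remaining free digits are controlled by the pressure function $P_0(s) = P(T, -s \log |T'|)$ on two independent blocks. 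Combining these contributions, the total mass at level $n$ decays like $\psi(n)^{1-3s} e^{n P_0(s)}$, and summability over $n$ is equivalent to $P(T, -s \log |T'| - (3s - 1) \log B) < 0$; the polynomial pre-factor $\binom{n}{2}$ from pairs $(k, l)$ is harmless, so $\HH^s(E(\psi)) = 0$ for every $s$ strictly above the claimed dimension, by Proposition \ref{bclem}.

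Finally, when $B = \infty$ the pressure-function method breaks down and one must pass to the {\L}uczak-type reasoning of Lemma \ref{lemb}. For the lower bound I would consider the subset $\{x : a_{n-1}(x) \ge c^{b^n},\ a_n(x) \ge c^{b^n} \text{ for all large } n\}$ of $E(\psi)$ for appropriate $c > 1$, and adapt the Cantor-set construction of \cite{Luczak} to show that imposing the doubly-exponential lower bound on two consecutive partial quotients, rather than one, still produces Hausdorff dimension $1/(b + 1)$; the dominant scale is set by $b$ alone. The upper bound follows from a covering analogous to the finite-$B$ case, specialised to doubly-exponential $\psi$. The main anticipated obstacles throughout are: the coordination of the sparse sequence $\{n_k\}$ with the $\liminf$ definition of $B$ so that $\hat E$ genuinely sits inside $E(\psi)$; and the observation that the symmetric choice $A_0 = A_1$, combined with the key inequality $d_1 \le d_0$, is precisely what produces the coefficient $(3s - 1)$ of $\log B$ characteristic of having exactly two large partial quotients.
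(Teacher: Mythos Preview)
Your approach to the lower bound in the finite-$B$ case is essentially the same as the paper's: both set $m=2$, choose $A_0=A_1$ close to $B$, apply the $\hat E$ construction of Section~\ref{remark1} along a sparse subsequence coming from the $\liminf$, and identify $\min\{d_0,d_1\}=d_1$ to obtain the $(3s-1)\log B$ potential. Your introduction of the auxiliary $\epsilon$ (taking $A_0=A_1=B^{1+2\epsilon}$ and letting $\epsilon\to 0$ at the end) is in fact slightly cleaner than the paper's direct choice $A_0=A_1=B$ together with the assertion $\psi(n_k+2)\le B^{n_k}$, which can fail in edge cases such as $\psi(n)=B^n$.

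The paper stops at the lower bound for finite $B$, deferring the upper bound and the $B=\infty$ case to the original Tan--Tian--Wang reference; your sketches of those parts go beyond what the paper itself proves. They are plausible outlines, though the upper-bound covering argument would need more care in controlling the double sum over positions $k<l$ and the $B=\infty$ case would require verifying that the two-quotient {\L}uczak-type set genuinely retains dimension $1/(b+1)$.
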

As always, the hardest part is to prove the lower bound in the case where $B$ is finite. However, this result can be easily extracted from Remark \ref{remark1}. By the definition of $B$, we can find a subsequence $\{n_k\}_{k\geq1}$ of integers such that
$$
\log B = \lim_{k\to\infty} \frac{\psi(n_k+2)}{n_k+2} \text{\,\,\,\, and \,\,\,\,} \psi(n_k+2) \leq B^{n_k} \text { for all } k\geq 1.
$$
Next, we set $m=2, A_0= A_1 = B$ and apply the result from Remark \ref{remark1} for the sequence  $\{n_k\}_{k\geq1}$ while letting $N\to\infty$ and $M\to\infty$. This leads us to
$$
\hdim E(\psi) \geq \min \{d_0, d_1 \} = d_1,
$$
where 
\begin{align*}
d_0&= \inf \{ s\ge 0 : P(T, -s\log B - s \log | T'(x) | ) \leq 0 \}, \\
d_1 & = \inf \{ s\ge 0 : P(T, -(3s-1)\log B - s \log | T'(x) | ) \leq 0 \} .
\end{align*}
This coincides with the lower bound from Tan-Tian-Wang result.
\subsection{New results} We present one new result in this section. 

As we mentioned above, in \cite{HLS} authors have found a Hausdorff dimension for the set
\begin{equation*}
\FF_{B_1,B_2}=\left\{x\in[0,1):
\begin{split} 
 a_n(x)a_{n+1}(x) & \geq B_1^n \text{\,\, for infinitely many } n\in\N \\
 a_{n+1}(x) & < B_2^n  \text{\,\, for all sufficiently large } n\in\N
\end{split}
\right\}
\end{equation*}   

This result can be generalised to the case of a product of $m\ge2$ partial quotients.
For $m\ge2$ consider the set
\begin{equation}
\FF^m_{B_1,B_2}=\left\{x\in[0,1):
\begin{split} 
 a_{n}(x)\cdots a_{n+m-1}(x) & \geq B_1^n \text{\,\, for infinitely many } n\in\N \\
 a_{n+1}(x)\cdots a_{n+m-1}(x) & < B_2^n  \text{\,\, for all sufficiently large } n\in\N
\end{split}
\right\}.
\end{equation} 

For $t^{(m)}_{B_1}$ from \eqref{hwxconst} denote it as $t^{(m)}_{B_1}=t$ and let
$$
\theta_m = \frac{t^m-t(1-t)^{m-1}}{t^{m}-(1-t)^{m}}.
$$


\begin{theorem}\label{generalm}
For any $B_1,B_2>1$,
 \begin{itemize}
\item when $B_1^{\theta_m} \le B_2$,
$$ \hdim \FF^m_{B_1,B_2} = {t^{(m)}_{B_1}};$$
\item when  $B_1^{\theta_m} > B_2 > B_1^{1/2}$,
$$\hdim  \FF^m_{B_1,B_2} = g_{B_1,B_2};$$
\item when $B_1^{1/2}\ge B_2$,
$$ \FF^m_{B_1,B_2}=\emptyset.$$

\end{itemize}
\end{theorem}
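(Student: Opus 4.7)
The proof splits naturally into the three regimes appearing in the statement.

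\emph{Emptiness case.} Suppose $x\in\FF^m_{B_1,B_2}$ and $B_1^{1/2}\ge B_2$. For infinitely many large $n$, the conditions $a_n(x)\cdots a_{n+m-1}(x)\ge B_1^n$ and $a_{n+1}(x)\cdots a_{n+m-1}(x)<B_2^n$ hold simultaneously, so $a_n(x)>(B_1/B_2)^n\ge B_2^n$ by the hypothesis $B_1\ge B_2^2$. But applying the eventual bound at the shifted index $n-1$ gives $a_n(x)<B_2^{n-1}<B_2^n$, a contradiction. Hence $\FF^m_{B_1,B_2}=\emptyset$.

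\emph{Case 1} ($B_1^{\theta_m}\le B_2$). The upper bound is tautological: $\FF^m_{B_1,B_2}\subseteq E_m(B_1)$, so Theorem \ref{HWXtheorem} gives $\hdim\FF^m_{B_1,B_2}\le t^{(m)}_{B_1}$. For the matching lower bound, apply Theorem \ref{General} with the Huang-Wu-Xu exponents $A_i=B_1^{\alpha_i}$, where $\alpha_i=t^{m-1-i}(2t-1)(1-t)^i/(t^m-(1-t)^m)$ and $t=t^{(m)}_{B_1}$; this balances $d_0=\cdots=d_{m-1}=t^{(m)}_{B_1}$. It remains to check $S_m(A_0,\ldots,A_{m-1})\subseteq\FF^m_{B_1,B_2}$. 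A window-by-window analysis of the block at $n_k$ shows that the tightest $B_2$-constraint comes from index $n=n_k-1$, where the block product equals $\beta_{m-2}^{n_k}=(B_1/A_{m-1})^{n_k}$; a direct calculation from the HWX exponents gives $B_1/A_{m-1}=B_1^{\theta_m}$, so the constraint becomes precisely the Case 1 hypothesis $B_1^{\theta_m}\le B_2$ (other windows yield weaker conditions, using $t\ge 1/2$).

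\emph{Case 2} ($B_1^{\theta_m}>B_2>B_1^{1/2}$). Both bounds require real work. For the upper bound, I cover $\FF^m_{B_1,B_2}$ at scale $n$ by cylinders $I_{n+m-1}(a_1,\ldots,a_{n+m-1})$ for which the tail $(a_n,\ldots,a_{n+m-1})$ satisfies both product constraints. Using $|I_{n+m-1}|\asymp(q_{n-1}a_n\cdots a_{n+m-1})^{-2}$, the $s$-covering sum factorises and the inner sum reduces---after grouping by $Q=a_{n+1}\cdots a_{n+m-1}<B_2^n$, bounding the ordered factorisations of $Q$ into $m-1$ parts by a standard divisor estimate, and summing the tail $\sum_{a_n\ge B_1^n/Q}a_n^{-2s}$---to an exponential rate matching the potential $-s\log|T'|-s\log B_1+(1-s)\log B_2$ defining $g_{B_1,B_2}$. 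The subsystem approximation (Proposition \ref{l5}) together with Proposition \ref{bclem} then gives convergence for every $s>g_{B_1,B_2}$, hence $\hdim\FF^m_{B_1,B_2}\le g_{B_1,B_2}$. For the matching lower bound the HWX choice no longer lies in $\FF^m_{B_1,B_2}$, so I apply Theorem \ref{General} (invoking Remark \ref{remark1} for arbitrary sparse sequences if needed) with the perturbed parameters $A_0=B_2(1-c\varepsilon)$, $A_i=1+\varepsilon$ for $1\le i\le m-2$, and $A_{m-1}$ determined by $\prod_{i=0}^{m-1}A_i=B_1$, with $c$ calibrated so that $\beta_{m-2}<B_2$ strictly. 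A window-by-window check as in Case 1 puts the Cantor construction inside $\FF^m_{B_1,B_2}$; as $\varepsilon\to 0$ all of $\beta_0,\ldots,\beta_{m-2}$ tend to $B_2$, so continuity of the pressure function forces $d_{m-1}\to g_{B_1,B_2}$, while a direct comparison of pressure equations---using $B_1>B_2$ and $B_2>B_1^{1/2}$---shows the remaining $d_i$ tend to values no smaller than $g_{B_1,B_2}$, yielding $\min_i d_i=g_{B_1,B_2}$ in the limit.

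\emph{Main obstacle.} The hardest steps, in my view, are the combinatorial estimate in the Case 2 upper bound---one must control the ordered divisor count alongside the constraint $Q<B_2^n$ and cleanly identify the resulting exponential rate with the pressure potential defining $g_{B_1,B_2}$---and the identification $\min_i d_i=g_{B_1,B_2}$ in the Case 2 lower bound, where the verification that $d_0\to s(B_2)\ge g_{B_1,B_2}$ is precisely the place where the strict inequality $B_2>B_1^{1/2}$ enters the argument via a pressure comparison.
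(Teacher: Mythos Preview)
Your emptiness argument is fine (and a touch cleaner than the paper's). In Case 1 the idea is right, but note that $S_m$ itself is not a subset of $\FF^m_{B_1,B_2}$: points of $S_m$ may have arbitrarily large partial quotients outside the blocks, violating the ``for all sufficiently large $n$'' constraint. You must pass to the Cantor subset $E$ (or $\hat E$) built in the proof of Theorem \ref{General}; the paper also picks non-trivial constants $c_i$ to absorb the factors $2^{m-1}$ and to handle the boundary case $B_1^{\theta_m}=B_2$, where the exponential rates alone do not separate.

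Both halves of Case 2 have genuine gaps. For the upper bound, covering by $I_{n+m-1}$ and grouping by $Q=a_{n+1}\cdots a_{n+m-1}$ does \emph{not} give the rate you claim: after $\sum_{a_n\ge B_1^n/Q}a_n^{-2s}\asymp(Q/B_1^n)^{2s-1}$ one is left with $B_1^{n(1-2s)}\sum_{(a_{n+1},\ldots,a_{n+m-1}):\,Q<B_2^n}Q^{-1}$, and this last sum is only $\asymp(n\log B_2)^{m-1}$, so the exponential rate is $B_1^{1-2s}$ and the resulting threshold is the zero of $P(T,-s\log|T'|)-(2s-1)\log B_1$, which is strictly larger than $g_{B_1,B_2}$ whenever $B_2<B_1$. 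The paper's device is to shift the second constraint to index $n-1$ (so that $a_n\cdots a_{n+m-2}<B_2^{n-1}$) and cover by the intervals $J_{n+m-2}=\bigcup_{a_{n+m-1}}I_{n+m-1}$, whose length is $\asymp(B_1^n\,a_n\cdots a_{n+m-2}\,q_{n-1}^2)^{-1}$; summing its $s$-th power via \cite[Lemma~4.2]{HuWuXu} produces exactly the $g_{B_1,B_2}$ rate. For the lower bound, your choice $A_0\approx B_2$, $A_1=\cdots=A_{m-2}\approx 1$ forces $d_0\to s(B_2)$, and the assertion $s(B_2)\ge g_{B_1,B_2}$ is equivalent to $g_{B_1,B_2}\ge\log B_2/\log B_1$, which fails in general: already for $m=3$ with $B_2$ just below $B_1^{\theta_3}$ one has $s(B_2)<t^{(3)}_{B_1}=g_{B_1,B_2}$ (indeed $s(B_1^{\theta_m})=t^{(m)}_{B_1}$ would force $t\theta_m=f_m(t)$, which for $m\ge 3$ and $t>\tfrac12$ is false). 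Hence your construction only reaches $\min_i d_i=s(B_2)<g_{B_1,B_2}$. The paper instead keeps geometric ratios among $A_0,\dots,A_{m-2}$ so that $d_0=\cdots=d_{m-2}$ are balanced, sets $\beta_{m-2}=B_2$, and then uses monotonicity in $\beta_{m-2}$ to conclude $d_{m-1}=g_{B_1,B_2}<d_0$ precisely when $B_2<B_1^{\theta_m}$.
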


\begin{proof}We split the proof into two cases.
\subsubsection{$B_1^{1/2}\ge B_2$}
By definition of our set we know that
$$
 a_{n+1}\cdots a_{n+m-1} < B_2^n
$$
 and
$$
 a_{n}\cdots a_{n+m-2}  < B_2^{n-1}.
$$
Multiplying two inequalities, we get 
\begin{equation}\label{simply}
 a_{n}\cdots a_{n+m-1}\leq a_{n}( a_{n+2}\cdots a_{n+m-2})^2 a_{n+m-1}  < B_2^{2n-1} \le \frac{B_1^n}{B_2} <B_1^n.
\end{equation}
This contradicts the first condition of our set, that is $a_{n}(x)\cdots a_{n+m-1}(x)  \geq B_1^n$. Hence in this case the set $ \FF^m_{B_1,B_2}$ is empty.

\subsubsection{Case $B_1^{1/2} < B_2$} First, we will work out the upper bound for cases $B_1^{\theta_m} \le B_2$ and $B_1^{\theta_m} > B_2 > B_1^{1/2}$, and then we will present a unified approach to lower bounds for both of these cases.\\
\noindent {\bf The upper bounds.} When $B_1^{\theta_m} \le B_2$ one can see that $\FF^m_{B_1,B_2} \subset E_m(B_1)$. Hence  
$$\hdim \FF^m_{B_1,B_2} \le \hdim E_m(B_1) = t_{B_1}^{(m)},$$
where $E_m(\psi)$ and $t_{B_1}^{(m)}$ were defined in \eqref{hwxconst}.
When $B_1^{\theta_m} > B_2 > B_1^{1/2}$,  consider the set
\begin{align*}
U  = \Bigg\{ x\in[0,1): 1 & \leq a_{n}(x)\cdots a_{n+m-2}(x) \leq B_2^n ,  \\
 & a_{n+m-1}(x) \geq \frac{B_1^n}{a_{n}(x)\cdots a_{n+m-2}(x)}\, \text{for infinitely many } n\in\N  \Bigg\}.
\end{align*}
Clearly, $\FF^m_{B_1,B_2} \subset U$.

The limsup nature of $U$ gives us a natural cover for it. For each $n\ge1$, define
$$
U_{n} = \left\{ x \in [0,1) : 1\le a_{n}(x)\cdots a_{n+m-2}(x) \le B_2^n , a_{n+m-1}(x) \ge   \frac{B_1^n}{a_{n}(x)\cdots a_{n+m-2}(x)} \right\}.
$$
Then $U$ can be expressed as
$$
U = \bigcap\limits_{N=1}^{\infty} \bigcup\limits_{n=N}^{\infty} U_{n}.
$$
So a cover for $U_{n}$ for each $n\ge N$ will give a cover for $U$. Naturally,
$$
U_{n} \subseteq \bigcup_{a_1,\ldots,a_{n-1}\in\N} \bigcup_{1 \leq a_{n}\cdots a_{n+m-2} \leq B_2^n} J_{n+m-2}(a_1,a_2,\ldots,a_{n+m-2}),
$$
where 
$$
J_{n+m-2}(a_1,a_2,\ldots,a_{n+m-2}) =  \bigcup_{ a_{n+m-1} \geq \frac{B_1^n}{a_{n}\cdots a_{n+m-2}}} I_{n+m-2}(a_1,a_2,\ldots,a_{n+m-2}).
$$
It is easy to see that
\begin{align*}
| J_{n+m-2}(a_1,a_2,\ldots,a_{n+m-2})  |& \asymp \frac{1}{\frac{B_1^n}{a_{n}\cdots a_{n+m-2}}q_{n+m-2}^2}\\
 & \asymp \frac{1}{B_1^n a_{n}\cdots a_{n+m-2}q^2_{n-1}}.
\end{align*}
Fix $\epsilon>0$. Then there exists $N_0 \in \N$ such that for all $n\ge N_0$, we have
$$
\frac{ (\log B_2^n)^{m-1}}{(m-1)!} \le \frac{ (\log B_1^n)^{m-1}}{(m-1)!} \le B_1^{n\epsilon}.
$$
By using this estimate with the following lemma we obtain the upper bound.
\begin{lemma}[{\cite[Lemma 4.2]{HuWuXu}}]
Let $\beta>1$. For any integer $k\ge1, 0<s<1$, we have
\begin{equation}
\sum\limits_{1\le a_{n}\cdots a_{n+m-1} \le \beta^n} \left( \frac{1}{a_n\cdots a_{n+k-1}} \right) \asymp \frac{ (\log \beta^n)^{k-1}}{(k-1)!} \beta^{n(1-s)}.
\end{equation}
\end{lemma}
Thus, the $s$-dimensional Hausdorff measure of $U$ can be estimated as
\begin{align*}
\HH^{s+\epsilon} ( U) & \le \liminf_{N\to\infty} \sum_{n\ge N} \sum_{a_1,\ldots, a_{n-1}} \sum_{1\le a_{n}\cdots a_{n+m-2} \le B_2^n } \left( \frac{1}{B_1^n a_{n}\cdots a_{n+m-2} q_{n-1}^2} \right)^{s+\epsilon} \\
&  \le \liminf_{N\to\infty} \sum_{n\ge N} \sum_{a_1,\ldots, a_{n-1}} B_2^{n(1-s)} \left( \frac{1}{B_1^n q_{n-1}^2} \right)^s.
\end{align*}
Hence,
$$
\hdim \FF^m_{B_1,B_2} \le \hdim U \leq  \inf \{s \geq 0: P(T, (1-s)\log B_2 - s\log B_1 -s \log |T'|) \leq 0 \}=g_{B_1,B_2}.
$$
\noindent {\bf The lower bounds.}
For the lower bounds, we will apply Theorem \ref{General}. Namely, let us consider a set \eqref{mainsubset} from the proof of Theorem \ref{General}:
\begin{align}
E =  \{ x\in[0,1): \,  c_i A_i^{n_k} \le a_{n_k+i}(x) < 2c_i A_i^{n_k} &\text{ for all } k\geq1, \text{ for all } 0\le i \le m-1\\
& \text{ and } a_n(x)\in\{1,\ldots,M\} \text{ for other } n\in\N \}.\notag
\end{align}
In Theorem \ref{General} we considered this set with $c_0=c_2=\cdots=c_{m-1}=1$. For this set to be a subset of  $\FF^m_{B_1,B_2}$ we need to choose suitable parameters $A_i,c_i$, where $0\leq i \leq m-1$. Denote $t = t_{B_1}^{(m)}$ and let $A_0\cdots A_{m-1} = B_1$. Now for both of our cases, let us choose the parameters $A_i$ for $i=0,\ldots,m-3$ in such a way that quantities $d_i$ for $ i=0,\ldots,m-2$ from \eqref{defd} are all equal.
This can be done by letting $A_{k} = A_{k-1}^{\frac{1-t}{t}}$, or $A_k = A_0^{(\frac{1-t}{t})^k}$ for $k=1,\ldots,m-2$. In particular, for this choice of parameters $A_i$, for $k=0,\ldots, m-2$ we have
\begin{equation}\label{uni}
\beta_k = A_0\cdots A_k =A_0^{ \frac{t^{k+1}-(1-t)^{k+1}}{t^k(2t-1)}} =  \beta_0^{ \frac{t^{k+1}-(1-t)^{k+1}}{t^k(2t-1)}}
\end{equation}
and $\beta_{m-1}=B_1$ by the choice we have previously made. By \eqref{uni} and \eqref{defd}, we have that $s(\beta_0)=d_0=\cdots=d_{m-2}$, where $s(B)$ was defined in Theorem \ref{WaWu}. Note that using \eqref{uni}, $\beta_0$ can be expressed in terms of $\beta_{m-2}$. So using definitions of $d_0=s(\beta_0)=s\left(\beta_{m-2}^\frac{t^{m-2}(2t-1)}{t^{m-1}-(1-t)^{m-1}}\right)$ and $d_{m-1}=g_{B_1,B_2}$  we see that they both depend on a free parameter $\beta_{m-2}$. Moreover, $d_0$ is decreasing and $d_{m-1}$ is increasing with respect to $\beta_{m-2}$. One can easily check that if we let
$$
\beta_{m-2} = B_1^{\theta_m},
$$
we will have $d_0=\cdots=d_{m-2}=d_{m-1}=t$, 
and so when $\beta_{m-2} < B_1^{\theta_m}$, we have $d_{m-1}< d_0$ and when $\beta_{m-2} \geq B_1^{\theta_m}$, we have $d_{m-1} \geq d_0$.
By the proof of Theorem \ref{General}, we know that for $M,N \to\infty$,
$$
\hdim E \geq \min \{ d_0, d_{m-1} \}.
$$
At this point we consider two cases.\\
{\bf Case 1:} $B_1^{\theta_m} \le B_2$. Let $\beta_{m-2} = B_1^{\theta_m}$ and
\begin{align*}
&c_0 = 2^{m-1} B_2, \\
&c_1=\frac{1}{B_2^3 2^{2(m-1)}},\\
&c_2=\cdots=c_{m-2}=1,\\
&c_{m-1} = B_2^2 2^{m-1}.
\end{align*}

We can conclude that $E$ is a subset of $\FF^m_{B_1,B_2}$,
because for our choice of $A_i,c_i$, where $0\leq i \leq m-1$, we have
\begin{align*}
&B_1^{n_k}=c_0\cdots c_{m-1}(A_0\cdots A_{m-1})^{n_k}\leq a_{n_k}\cdots a_{n_k+m-1}, \\
&B_2^{n_k-1}  >B_2^{n_k-2} \geq  \frac{1}{B_2^2} B_1^{\theta_m n_k} =2^{m-1} c_0\cdots c_{m-2} (A_0\cdots A_{m-2} )^{n_k} \geq a_{n_k}\cdots a_{n_k+m-2}, \\
& B_2^{n_k} > \frac{B_1^{\theta_m n_k}}{B_2} \geq  \frac{1}{B_2} (A_0\cdots A_{m-2} )^{n_k}  > 2^{m-1} c_1\cdots c_{m-1} (A_1\cdots A_{m-1} )^{n_k} \geq a_{n_k+1}\cdots a_{n_k+m-1},
\end{align*}  
where on the last line we used that $A_i <A_{i-1}$.

Hence by definition of our sets, set $E$ is indeed a subset of $\FF^m_{B_1,B_2}$. By the proof of Theorem \ref{General},
$$
\hdim \FF^m_{B_1,B_2} \geq  \hdim E \geq  t := t_{B_1}^{(m)} \text{ when } M,N \to\infty.
$$
Combining with the upper bound we conclude that $\hdim \FF^m_{B_1,B_2}=t_{B_1}^{(m)}$ in the case $B_1^{\theta_m} \le B_2$.\\
{\bf Case 2:} $B_1^{\theta_m} > B_2 > B_1^{1/2}$.  Let $\beta_{m-2}=B_2$. Then by what was said above, $d_{m-1}< d_0$, where $d_{m-1} =g_{B_1,B_2}  $. 
We need to make sure that $E$ is a subset of  {$\FF^m_{B_1,B_2}$.} For this we need to check that
\begin{align}
&c_0\cdots c_{m-1} (A_0\cdots A_{m-1})^{n_k} \geq B_1^{n_k}, \label{111} \\
&c_0\cdots c_{m-2} (A_0\cdots A_{m-2})^{n_k} <B_2^{n_k-1},  \label{222} \\
&c_1\cdots c_{m-1} (A_1\cdots A_{m-1} )^{n_k}<B_2^{n_k} . \label{333}
\end{align}
We can choose values for the constants $c_i$ as follows:
\begin{align*}
&c_0=c_1=\cdots=c_{m-3}=1,\\
&c_{m-2}=\frac{1}{B_2^2}, \\
&c_{m-1}=B_2^2.
\end{align*}
Now with this choice of $c_i$ inequalities \eqref{111} and \eqref{222} easily follow from $\beta_{m-1}=B_1$ and $\beta_{m-2}=B_2$. Notice that $A_{m-1} = \frac{B_1}{B_2}$.
Hence the inequality \eqref{333} is just $A_{m-1} < A_0$, which is true. (Note  that $\frac{A_k}{A_{k-1}}<1$ for all $k$.)
Now by definition of our sets, set $E$ is indeed a subset of $\FF^m_{B_1,B_2}$. By the proof of Theorem \ref{General},
$$
\hdim \FF^m_{B_1,B_2} \geq  \hdim E \geq  g_{B_1,B_2} \text{ when } M,N \to\infty.
$$
Combining with the upper bound we conclude that $\hdim \FF^m_{B_1,B_2}= g_{B_1,B_2}$ in the case $B_1^{\theta_m} > B_2 > B_1^{1/2}$.

\end{proof}



%
%
%
%
%

\end{document}